\documentclass[11pt,reqno,a4paper]{amsart}
\usepackage[utf8]{inputenc}
\usepackage[english]{babel}
\usepackage[left=1in, bottom=1in, top=1in, right=1in,footskip=0.5in]{geometry}
\usepackage{amsmath}
\usepackage{amssymb}
\usepackage{amsthm}
\usepackage[dvipsnames]{xcolor}
\usepackage[pdftex, colorlinks, linkcolor={PineGreen!80!black}, citecolor={PineGreen!80!black}, urlcolor={PineGreen!80!black}, pdfpagemode=UseOutlines, bookmarksopen=true]{hyperref}
\usepackage[alphabetic, msc-links, nobysame]{amsrefs}
\usepackage[capitalize,nameinlink]{cleveref}

\theoremstyle{plain}
\newtheorem{thm}{Theorem}[section]
\newtheorem{lem}[thm]{Lemma}
\newtheorem{prop}[thm]{Proposition}
\newtheorem{cor}[thm]{Corollary}
\newtheorem*{conj}{Conjecture}
\theoremstyle{definition}

\crefname{thm}{Theorem}{Theorems}
\crefname{lem}{Lemma}{Lemmas}
\crefname{conj}{Conjecture}{Conjectures}

\numberwithin{equation}{section}

\newcommand{\R}{\mathbb{R}}
\newcommand{\mmod}[1]{~\!\!(\mathrm{mod}~#1)}

\let\Re\relax
\let\Im\relax
\DeclareMathOperator{\Im}{Im}
\DeclareMathOperator{\Re}{Re}

\renewcommand{\L}{\mathcal{L}}
\newcommand{\glh}{\hyperlink{conj-glh}{GLH}}

\begin{document}

\title{A conditional bound for the least prime in an arithmetic progression}
\author{Mat\'ias Bruna}
\address{Department of Mathematics, University of Toronto, Toronto, ON, M5S 2E4, Canada}
\email{matias.bruna@mail.utoronto.ca}

\begin{abstract}
Assuming the generalized Lindel\"of hypothesis for Dirichlet $L$-functions, we establish that the least prime  $p\equiv a\pmod{q}$ satisfies $p\ll_{\varepsilon} q^{2+\varepsilon}$.
This achieves a bound that nearly matches the classical estimate implied by the generalized Riemann hypothesis.
\end{abstract}

\maketitle

\section{Introduction}

Given $a$ and $q$ coprime integers, let $P(a,q)$ be the least prime $p\equiv a\pmod{q}$.
In 1944, Linnik \cites{Linnik-1,Linnik-2} proved the existence of an absolute constant $L>0$ such that
\begin{equation}\label{eq-Linnik-L}
P(a,q)\ll q^{L},
\end{equation}
establishing the first unconditional polynomial upper bound in $q$ valid for all residue classes.
Assuming the generalized Riemann hypothesis for Dirichlet $L$-functions (GRH), one can prove
\begin{equation}\label{eq-Linnik-grh}
P(a,q)\ll(\phi(q)\log q)^{2},
\end{equation}
which implies \eqref{eq-Linnik-L} with $L=2+\varepsilon$.
While this bound is classical, a sequence of works has made the implied constant explicit, starting with Bach \cite{Bach-1985} and recently refined by Lamzouri, Li, and Soundararajan \cite{Lamzouri-Li-Sound-2015} and Carneiro, Milinovich, Quesada-Herrera, and Ramos \cite{CMQHR}.

The goal of this article is to establish a bound for $P(a,q)$ under the weaker assumption of the generalized Lindel\"of hypothesis for Dirichlet $L$-functions (GLH).
In its classical form, GLH states that for any primitive Dirichlet character $\chi$ modulo $q$, for all $\varepsilon>0$ and $t\in\R$, we have
\[L(1/2+it,\chi)\ll_{\varepsilon}(q(1+|t|))^{\varepsilon}.\]
By considering the primitive character $\chi^{*}$ inducing $\chi$, this bound extends to all characters.
The Phragmen--Lindel\"of principle then extends this to the half-plane $\sigma\geq 1/2$, provided $s=\sigma+it$ stays bounded away from the pole at $s=1$ when $\chi$ is principal.
We therefore adopt the following form of the conjecture.
\hypertarget{conj-glh}{}
\begin{conj}[GLH]
Let $\chi\pmod{q}$ be a Dirichlet character.
Then for any $\varepsilon>0$ and $s=\sigma+it$ satisfying $\sigma\geq 1/2$ and $|s-1|\geq 1/2$, we have
\[L(\sigma+it,\chi)\ll_{\varepsilon}(q(1+|t|))^{\varepsilon}.\]
\end{conj}

Our main theorem shows that a bound nearly matching \eqref{eq-Linnik-grh} holds under GLH.

\begin{thm}\label{main-thm}
Assume \glh.
For any $\varepsilon>0$ and any integer $a$ coprime to $q$, there exists a constant $q(\varepsilon)$ such that for all $q\geq q(\varepsilon)$ we have $P(a,q)\leq q^{2+\varepsilon}$.
The constant $q(\varepsilon)$ is effectively computable for $\varepsilon>1$ and ineffective otherwise.
\end{thm}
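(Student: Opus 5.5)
We aim to show that, under \glh, the sum $\sum_{n\le x,\, n\equiv a \,(q)}\Lambda(n)w(n/x)$ is positive for $x=q^{2+\varepsilon}$, for a suitable smooth weight $w$. Orthogonality of characters gives
\[
S(x):=\sum_{n\equiv a\,(q)}\Lambda(n)w(n/x)=\frac{1}{\phi(q)}\sum_{\chi\,(q)}\overline{\chi}(a)\sum_{n}\Lambda(n)\chi(n)w(n/x).
\]
The principal character contributes a main term $\asymp x/\phi(q)$, up to an error from prime powers dividing $q$ that is negligible. It therefore suffices to show that the total contribution of the nonprincipal characters is $o(x/\phi(q))$.

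\glh{} by itself gives no pointwise control of individual sums $\psi(x,\chi)$ at this range, since zeros off the critical line are not excluded. Instead we use zero-density. \glh{} implies the density hypothesis in a strong form: the number of zeros of $\prod_{\chi\,(q)}L(s,\chi)$ with $\beta\ge\sigma$ and $|\gamma|\le T$ is
\[
N(\sigma,T,q)\ll (qT)^{2(1-\sigma)+\varepsilon}.
\]
This follows from the standard zero-detection method via mollifiers and large values, using Lindelöf bounds for $L(1/2+it,\chi)$ in place of mean values. More importantly, it also implies the known stronger estimate that for $\sigma\ge 3/4+\delta$ the density is $\ll (qT)^{\varepsilon}$ up to a Halász--Montgomery large-values argument, with possibly a small exponent times $(1-\sigma)$.

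Writing the explicit formula with a smooth compactly supported $w$ whose Mellin transform decays rapidly, the nonprincipal contribution is
\[
\frac{1}{\phi(q)}\sum_{\chi\ne\chi_0}\sum_{\rho}x^{\rho}\widehat{w}(\rho)\ll \frac{1}{\phi(q)}\sum_{|\gamma|\le q^{\varepsilon}}x^{\beta}+O(x^{-10}).
\]
We bound this by $\frac{1}{\phi(q)}\int_0^1 x^{\sigma}\,dN(\sigma,q^{\varepsilon},q)$. With the density bound $N(\sigma)\ll q^{2(1-\sigma)+\varepsilon}$, the integrand is $x^{\sigma}q^{2(1-\sigma)+\varepsilon}=x\,(q^{2}/x)^{1-\sigma}q^{\varepsilon}$. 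This is $o(x)$ once $x\ge q^{2+3\varepsilon}$, provided $1-\sigma$ is bounded below, that is, away from zeros very near $\sigma=1$. The ranges of $\sigma$ are treated as follows.
\begin{itemize}
\item For $\sigma\le 1-\eta$ the estimate above suffices.
\item Zeros with $\beta\ge 1-\eta$ for a small fixed $\eta$ are handled by Siegel's theorem: for $\beta>1-c(\varepsilon)q^{-\varepsilon}$ no zeros exist at all, ineffectively. This is the source of ineffectivity for $\varepsilon\le 1$.
\item In the intermediate range, \glh{} plus a Turán power-sum or log-free density estimate gives $N(1-\lambda,T,q)\ll (qT)^{c\lambda}$. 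This yields a zero-free region $\beta<1-c\varepsilon$ except possibly for a Siegel zero, so the $q^{\varepsilon}$ loss is absorbed.
\end{itemize}
For $\varepsilon>1$, i.e.\ $x\ge q^{3}$, the exponent margin is large enough that the weaker effective Page-type bound $1-\beta\gg q^{-1/2}\log^{-2}q$ suffices. Alternatively, Deuring--Heilbronn repulsion makes the exceptional zero harmless, which gives effectivity.

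The main obstacle is deriving the density estimate $N(\sigma,T,q)\ll(qT)^{2(1-\sigma)+\varepsilon}$ from \glh{} with the sharp exponent $2$, and uniformly in $\sigma$ near $1$. The standard approach runs as follows.
\begin{enumerate}
\item Take the mollifier $M_X(s,\chi)=\sum_{n\le X}\mu(n)\chi(n)n^{-s}$ and detect each zero via $\sum_{X<n\le Y}a_n\chi(n)n^{-\rho}e^{-n/Y}\gg 1$.
\item Shift contours to $\Re s=1/2$, where \glh{} bounds $L M_X$ by $X^{1/2}(qT)^{\varepsilon}$.
\item Count the remaining large Dirichlet polynomial values using the large sieve $\sum_\chi\sum_{\rho}|\sum a_n\chi(n)n^{-\rho}|^{2}\ll (N+qT)\sum|a_n|^{2}n^{-2\sigma}$.
\end{enumerate}
Choosing $Y=(qT)^{1+\varepsilon}$ gives the exponent $2(1-\sigma)$ through the large sieve term $qT\cdot Y^{1-2\sigma}$ together with the count of zeros. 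I would try first to optimize this classical Montgomery argument, checking that the $q^{\varepsilon}$ losses and the near-$\sigma=1$ regime, handled by Huxley-type large values, combine correctly.
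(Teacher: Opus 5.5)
Your argument has a genuine gap at the zeros close to $\Re s=1$, which is where the difficulty of the theorem lies.

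The bound $N(\sigma,T,q)\ll(qT)^{2(1-\sigma)+\varepsilon}$ is not log-free. It permits $q^{\varepsilon}$ zeros with $1-\beta\ll1/\log q$, each contributing $\asymp x$, so everything rests on your treatment of $\beta\ge1-\eta$, and both steps there fail. Siegel's theorem only removes zeros with $1-\beta<c(\varepsilon)q^{-\varepsilon}$; it says nothing about the range $q^{-\varepsilon}\ll1-\beta\le\eta$. A log-free estimate $N(1-\lambda,T,q)\ll(qT)^{c\lambda}$ also does not yield a zero-free strip $\beta<1-c\varepsilon$. A density estimate bounds how many zeros there are but never excludes them, and GLH is not known to give any zero-free strip of fixed width.

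What GLH does give, and what the paper proves in Proposition~\ref{zero-free-LH}, is this: for every fixed $\eta$ and $q\ge q_2(\eta)$ there is at most one zero (real, simple, of a real character) with $\sigma\ge1-\eta/\log q$, $|t|\le\log q$. The proof feeds GLH into Heath-Brown's Jensen-formula bound to get $-\Re\frac{L'}{L}(s,\chi)\le-\sum_{|1+it-\rho|\le\delta}\Re\frac{1}{s-\rho}+\varepsilon\log q$ (Lemma~\ref{bound-L'/L}), then runs the $3+4\cos\theta+\cos2\theta$ argument. Combine this with a log-free estimate with exponent $(2+\varepsilon)(1-\sigma)$ (Jutila for $\sigma\ge4/5$, the GLH density hypothesis below; Corollary~\ref{log-free-density}) and a weight supported on $[q^{2+2\varepsilon},q^{2+4\varepsilon}]$. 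Then the zeros with $(1-\beta)\log q\ge\eta$ contribute $\ll_\varepsilon e^{-\varepsilon\eta}$ relative to the main term, which is small once $\eta=\eta(\varepsilon)$ is large. The log-free constant must be essentially $2$; an unspecified $c$ only gives $q^{c+\varepsilon}$.

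The exceptional zero is not handled either, and your reduction to showing that the non-principal characters contribute $o(x/\phi(q))$ is false in that case. If $\chi_1(a)=1$, the term $x^{\beta_1}/\beta_1$ cancels the main term down to about $x(1-\beta_1)\log x$. Siegel's $1-\beta_1\gg_\varepsilon q^{-\varepsilon}$ and Page's effective $1-\beta_1\gg q^{-1/2}(\log q)^{-2}$ both still allow $(1-\beta_1)\log x\to0$ for every $x=q^{O(1)}$. So there is no ``exponent margin'' at $q^{3}$, and your effectivity argument for $\varepsilon>1$ fails.

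The paper handles moderately small $\lambda_1=(1-\beta_1)\log q$ with Deuring--Heilbronn repulsion (Proposition~\ref{repulsion}). This pushes every other zero to $(1-\beta)\log q\ge(1-\delta)\log(\delta/(5\lambda_1))$. It is paired with a count of zeros near $1$ sharp enough to make their total contribution $o(\lambda_1)$. The exponent-$2$ density bound would only give roughly $\lambda_1^{\varepsilon}$, which is why Proposition~\ref{bound-Nlambda} ($\sum_k e^{-0.1\lambda^{(k)}}\le3$) is needed.

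For $\lambda_1$ below a threshold depending on $\varepsilon$, the reduced main term is swamped by the $O_\varepsilon((\log\log q)^{-2})$ error terms. There the paper invokes Heath-Brown's theorem on Siegel zeros and the least prime: Corollary~2 is ineffective with exponent $2+\varepsilon$, and Corollary~1 is effective with exponent $3+\varepsilon$. That input, not Siegel's theorem as you use it, is the source of the ineffectivity and of the threshold $\varepsilon>1$.

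A smaller point concerns your density sketch. The plain large sieve on the short dyadic blocks coming from the mollifier gives $qT\,N^{1-2\sigma}$, far above $(qT)^{2(1-\sigma)}$ when $N$ is a small power of $qT$. The paper instead raises each block to a power $m_j$ so its length is about $kQ^2T$ before applying Lemma~\ref{dirichlet-poly}.
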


The proof follows a classical strategy using three key principles of Dirichlet $L$-functions: A zero-free region, log-free zero-density estimates, and a zero-repulsion theorem.
Under \glh, we obtain an arbitrarily large zero-free region and the density hypothesis in the entire half-strip.
By proving the latter, we establish a result that, to the author's knowledge, is considered folklore but has previously lacked a complete proof in the literature.
Together, these results allow us to bypass the delicate numerical calculations typically required in the unconditional setting.

Regarding the ineffectivity of the constant $q(\varepsilon)$, this arises specifically when a Siegel zero is exceptionally close to $s=1$.
In this extreme regime, we rely on the work of Heath-Brown \cite{Heath-Brown-Siegel-Linnik} concerning exceptional zeros and the least prime in an arithmetic progression.
Concretely, the ineffective exponent $2+\varepsilon$ stems from \cite{Heath-Brown-Siegel-Linnik}*{Corollary 2}, while \cite{Heath-Brown-Siegel-Linnik}*{Corollary 1} gives an effective upper bound with exponent $3+\varepsilon$.
To secure an effectively computable constant in the remaining subregime of this exceptional setting, we leverage the recent explicit Deuring--Heilbronn phenomenon established by Benli, Goel, Twiss, and Zaman \cite{Benli-Goel-Twiss-Zaman}.

Unconditionally, the strongest result obtained via the classical approach described above is due to Xylouris \cite{Xylouris-5}, who, building on ideas of Heath-Brown \cite{Heath-Brown-Linnik}, proved in his PhD thesis that one may take $L=5$.
See \cite{Heath-Brown-Linnik}*{\S 1} and the references therein for further background and historical context on these developments.

Alternative approaches avoiding the classical $L$-function machinery have also been developed. Most notably, Friedlander and Iwaniec \cites{Friedlander-Iwaniec-Linnik-1,Friedlander-Iwaniec-Linnik-2} used classical sieve methods to prove Linnik’s theorem with $L=75~\!744~\!000$, whereas Matomäki, Merikoski, and Teräväinen \cite{Matomaki-Merikoski-Teravainen-mult-structured} developed a new prime-detecting sieve that gave the much stronger bound $P(a,q)\ll q^{350}$.
In a different direction, Granville, Harper, and Soundararajan \cite{Granville-Harper-Sound-new-Halasz} proved Linnik's theorem using the so-called ``pretentious'' number theory, though without an explicit exponent $L$.

\subsection*{Notation}

Let $q\geq 3$ be an integer and set $\L=\log q$.
Given functions $f$ and $g$, we write $f\ll g$ or $f=O(g)$ to mean that there exists a positive constant $C$ such that $|f(x)|\leq Cg(x)$ for all $x$ in the range under consideration.
All implied constants are absolute unless explicitly stated, and dependence on a parameter such as $p$ is indicated by $\ll_{p}$ or $O_{p}$.
We write $s=\sigma+it$, and given $\chi\pmod{q}$ a Dirichlet character, we denote by $L(s,\chi)$ the corresponding $L$-function.
Its non-trivial zeros are denoted by $\rho=\beta+i\gamma$, where $\beta=1-\lambda/\L$.

\section{The three principles}

\subsection{Zero-free region}

We begin by recalling a Jensen-type formula due to Heath-Brown.

\begin{lem}[\cite{Heath-Brown-Linnik}*{Lemma 3.2}]\label{HB-jensen}
Let $f(z)$ be holomorphic for $|z-s|\leq R$, and non-vanishing both at $z=s$ and on the circle $|z-s|=R$.
Then
\[\Re\frac{f'}{f}(s)=\sum_{|\rho-s|<R}\Re\Big(\frac{1}{s-\rho}-\frac{s-\rho}{R^{2}}\Big)+\frac{1}{\pi R}\int_{0}^{2\pi}(\cos\alpha)\log|f(s+Re^{i\alpha})|\,d\alpha,\]
where $\rho$ ranges over zeros of $f$, counted with multiplicity.
\end{lem}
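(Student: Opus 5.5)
The plan is to derive the formula from the Poisson–Jensen representation of $\log|f|$ on the disc $|z-s|\le R$, by differentiating at the centre. After translating, assume $s=0$. Let $\rho_1,\dots,\rho_n$ be the zeros of $f$ in $|z|<R$, counted with multiplicity; by hypothesis none lies on $|z|=R$ or at $0$. Form the Blaschke-type product
\[B(z)=\prod_{j}\frac{R(z-\rho_j)}{R^{2}-\bar\rho_j z},\]
which has modulus $1$ on $|z|=R$ and the same zeros as $f$ inside the disc. Then $g=f/B$ is holomorphic and zero-free on a neighbourhood of the closed disc; this uses non-vanishing on the circle, since $f$ is holomorphic on a slightly larger disc and has no zeros in a thin annulus. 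Hence $\log g$ is holomorphic there, and $\log|g|=\log|f|$ on $|z|=R$.

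Next, compute $g'/g(0)$ by the mean value property. Write $h=\log g=\sum_k c_kz^k$. Then $u=\Re h=\log|g|$ satisfies
\[\frac{1}{\pi R}\int_0^{2\pi}e^{-i\alpha}u(Re^{i\alpha})\,d\alpha=c_1=h'(0),\]
from the Fourier expansion $u(Re^{i\alpha})=\Re c_0+\tfrac12\sum_{k\ge1}(c_kR^ke^{ik\alpha}+\bar c_kR^ke^{-ik\alpha})$. Taking real parts gives
\[\Re\frac{g'}{g}(0)=\frac{1}{\pi R}\int_0^{2\pi}(\cos\alpha)\log|f(Re^{i\alpha})|\,d\alpha.\]

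Finally, $\frac{f'}{f}=\frac{g'}{g}+\frac{B'}{B}$, and
\[\frac{B'}{B}(0)=\sum_j\Big(\frac{1}{0-\rho_j}+\frac{\bar\rho_j}{R^2}\Big).\]
Since $\Re\bar\rho_j=\Re\rho_j=-\Re(0-\rho_j)$, the real part of each term is $\Re\big(\frac{1}{s-\rho_j}-\frac{s-\rho_j}{R^2}\big)$ at $s=0$. Translating back to a general centre $s$ yields the stated identity.

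The only delicate point is justifying that $\log g$ is holomorphic up to the boundary, so that the Fourier/mean-value computation is legitimate. This follows from the hypotheses: $f$ has no zeros on the circle, so $g$ is zero-free on a slightly larger disc, and $B$ has no poles there because $|R^2/\bar\rho_j|>R$. Everything else is a direct computation.
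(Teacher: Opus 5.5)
Your proof is correct: dividing out the zeros inside the disc by the factors $R(z-\rho_j)/(R^2-\bar\rho_j z)$ and then reading off $\Re(g'/g)(s)$ as the first Fourier coefficient of $\log|g|=\log|f|$ on the circle is the standard argument. The paper gives no proof of this lemma and simply cites Heath-Brown's Lemma 3.2, which is proved in essentially this way.
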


As an application of this formula, we deduce an upper bound for the logarithmic derivative that will serve as the main tool to obtain our zero-free region.

\begin{lem}\label{bound-L'/L}
Let $\chi\pmod{q}$ be a non-principal Dirichlet character.
Assuming \glh, then for any $\varepsilon>0$ there exists $\delta=\delta(\varepsilon)>0$ such that
\begin{equation}\label{eq-bound-L'/L}
-\Re\frac{L'}{L}(s,\chi)\leq-\sum_{|1+it-\rho|\leq\delta}\Re\frac{1}{s-\rho}+\varepsilon\L
\end{equation}
uniformly for
\begin{equation}\label{eq-range-sigma-t}
1+\frac{1}{\L\log\L}\leq\sigma\leq 1+\frac{\log\L}{\L},\qquad |t|\leq 2\L,
\end{equation}
provided $q\geq q_{1}(\varepsilon)$.
\end{lem}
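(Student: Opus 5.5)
Apply Lemma~\ref{HB-jensen} with $f(z)=L(z,\chi)$, centered at $s$, with a small fixed radius $R$ depending on $\varepsilon$ (for instance $R\asymp\delta$). Since $\chi$ is non-principal, $L$ is entire. Moreover $\sigma>1$, so $L(s,\chi)\neq0$. If $L$ vanishes on the circle $|z-s|=R$, we perturb $R$ slightly, which costs nothing. Jensen gives
\[-\Re\frac{L'}{L}(s,\chi)=-\sum_{|\rho-s|<R}\Re\Big(\frac{1}{s-\rho}-\frac{s-\rho}{R^{2}}\Big)-\frac{1}{\pi R}\int_{0}^{2\pi}(\cos\alpha)\log|L(s+Re^{i\alpha},\chi)|\,d\alpha.\]

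\emph{Step 1: the integral term.} We split the circle into the arc with $\cos\alpha\ge0$ and the arc with $\cos\alpha<0$.

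On the arc $\cos\alpha\ge 0$ we have $\Re z\ge\sigma>1$. There $\log|L(z,\chi)|\ge -\log\zeta(\Re z)\ge-\log\zeta(\sigma)$, and $\zeta(\sigma)\ll \L\log\L$. So the contribution of this arc to minus the integral is $\le R^{-1}O(\log\L)=o(\L)$.

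On the arc $\cos\alpha<0$ we have $\Re z\ge 1-R\ge1/2$ and $|z-1|$ is irrelevant. \glh\ gives $\log|L|\le \eta\log(q(2+2\L))\le 2\eta\L$ for any fixed $\eta>0$. Since $\cos\alpha<0$, the term $-\cos\alpha\log|L|$ is bounded above by $|\cos\alpha|\cdot 2\eta\L$. Its contribution is therefore $\le (4/\pi)\eta\L/R$. Choosing $\eta=\varepsilon R/10$, this is at most $\varepsilon\L/2$.

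The main obstacle is that this upper-bound-only use of \glh\ works only because the sign of $\cos\alpha$ cooperates. On the right half of the circle a lower bound on $\log|L|$ is needed instead, and it comes from the Euler product. This forces that arc to stay in $\sigma>1$, which is true because the circle is centered at $s$ with $\sigma>1$.

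\emph{Step 2: the zero sum.} For every zero with $\beta\le1<\sigma$ we have
\[\Re\Big(\frac1{s-\rho}-\frac{s-\rho}{R^2}\Big)=(\sigma-\beta)\Big(\frac{1}{|s-\rho|^2}-\frac1{R^2}\Big)\ge0\]
when $|s-\rho|<R$. So in $-\sum$ every term is nonpositive, and we may discard any subset of the zeros. We keep only the zeros with $|1+it-\rho|\le\delta$. Since $|s-(1+it)|\le\log\L/\L=o(1)$, taking $R=2\delta$ ensures these zeros lie inside the disc for large $q$.

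For the kept zeros we must drop the $+\Re(s-\rho)/R^2$ piece. Its total is at most $N\cdot(\sigma-\beta)/R^2$, where $N$ is the number of zeros in a disc of radius $2\delta$. By Jensen's formula combined with \glh, applied on a disc centered at $2+it$, we get $N\ll \eta'\L$ for a small enough disc, with $\eta'$ arbitrarily small when $\delta$ is small. Each $\sigma-\beta\le 3\delta$, so this error is $\ll \eta'\L/\delta$.

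To make the bookkeeping close, one chooses the constants in a definite order: first fix $R=2\delta$ with $\delta$ small relative to $\varepsilon$, then take the \glh\ exponent $\eta$ tiny in terms of $\delta$ and $\varepsilon$. If the $N$-bound is too weak to absorb the $1/\delta$ factor, there is an alternative. We bound $\sum_{\rho}(\sigma-\beta)/R^2$ through $\Re(s-\rho)/|s-\rho|^2\ge(\sigma-\beta)/R^2$ and compare it to the main sum directly. Either way the error is $\le\varepsilon\L/2$.

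\emph{Conclusion.} Combining Steps 1 and 2 gives \eqref{eq-bound-L'/L} for $q\ge q_1(\varepsilon)$, uniformly in the range \eqref{eq-range-sigma-t}.
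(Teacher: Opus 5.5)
Your proof is correct. Its skeleton is the paper's: Heath-Brown's Jensen formula centred at $s$, the Euler product on the arc $\cos\alpha\ge 0$, the GLH upper bound on the arc $\cos\alpha<0$, and positivity to discard zeros. The difference lies in how you handle the correction terms $\Re(s-\rho)/R^{2}$ for the retained zeros.

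The paper decouples the radius from $\delta$. It takes $R\in(1/3,1/2]$ fixed, so the GLH loss has no $1/R$ blow-up. Each retained correction is then at most $9(\delta+\log\L/\L)$, and a crude Riemann--von Mangoldt count of the zeros near $1+it$ suffices once $\delta\ll\varepsilon$.

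By tying $R=2\delta$, you make the correction total $\asymp N/\delta$. This forces a second, GLH-conditional input, namely $N\ll\eta\L+1$ with $\eta$ the GLH exponent, and it forces you to choose $\eta$ after $\delta$. That input is valid, and your suggested route gives it. Apply Jensen's formula at $2+it$ with outer radius $3/2$, so the circle stays in $\Re z\ge 1/2$, and inner radius $1+\delta$. This gives $N\le (2\eta\L+O(1))/\log\frac{3}{2(1+\delta)}$, and then $\eta\le c\,\varepsilon\delta$ closes both of your steps at once. So your version works, but it costs an extra application of Jensen plus GLH that the paper's fixed large radius avoids.

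Two corrections to the write-up:
\begin{itemize}
\item The smallness of $N/\L$ comes from the GLH exponent, not from $\delta$. A disc about $2+it$ containing these zeros must have radius at least $1+\delta$, so shrinking $\delta$ does not help. The phrase ``$\eta'$ arbitrarily small when $\delta$ is small'' is therefore backwards.
\item The fallback ``alternative'' does not work. Comparing $(\sigma-\beta)/R^{2}$ with $\Re\frac{1}{s-\rho}$ only yields a multiplicative factor $1-|s-\rho|^{2}/R^{2}$ on the main sum, not the additive $\varepsilon\L$ error the lemma requires. It is also unnecessary, so drop it.
\end{itemize}
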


\begin{proof}
Our proof follows that of \cite{Heath-Brown-Linnik}*{Lemma 3.1}, allowing us to obtain a sharper bound by leveraging GLH.
We present the argument here to keep the exposition self-contained.

Fix $\varepsilon>0$ and let $\varepsilon_{0}>0$ be a parameter to be chosen later, only depending on $\varepsilon$.
For $s$ in the region \eqref{eq-range-sigma-t} and $1/3<R\leq 1/2$ such that $L(z,\chi)$ has no zeros on the circle $|z-s|=R$, \cref{HB-jensen} yields
\begin{equation}\label{-reL'/L-equal}
-\Re\frac{L'}{L}(s,\chi)=-\sum_{|s-\rho|<R}\Re\Big(\frac{1}{s-\rho}-\frac{s-\rho}{R^{2}}\Big)-\frac{1}{\pi R}\int_{0}^{2\pi}(\cos\alpha)\log|L(s+Re^{i\alpha},\chi)|\,d\alpha.
\end{equation}
Next, we estimate the integral above.
By \eqref{eq-range-sigma-t}, in the ranges $0\leq\alpha\leq\pi/2$ and $3\pi/2\leq\alpha\leq 2\pi$ we have the trivial bound
\[|\log L(s+Re^{i\alpha},\chi)|\leq\log\zeta(\sigma+R\cos\alpha)\leq\log\zeta(\sigma)\ll\log\L,\]
thus the total contribution of these ranges to the integral is $\leq\varepsilon_{0}\L$, say.
On the other hand, for $\pi/2\leq\alpha\leq 3\pi/2$ we have $1/2\leq\sigma+R\cos(\alpha)\leq 1$, so \glh~\! implies
\[\log|L(s+Re^{i\alpha},\chi)|\leq 2\varepsilon_{0}\L\]
for $q$ large enough.
Therefore
\[\int_{\pi/2}^{3\pi/2}(\cos\alpha)\log|L(s+Re^{i\alpha},\chi)|\,d\alpha\geq 2\varepsilon_{0}\L\int_{\pi/2}^{3\pi/2}\cos\alpha\,d\alpha=-4\varepsilon_{0}\L.\]
Combining the contribution of each range, \eqref{-reL'/L-equal} gives
\begin{equation}\label{-reL'/L-leq1}
-\Re\frac{L'}{L}(s,\chi)\leq -\sum_{|s-\rho|<R}\Re\Big(\frac{1}{s-\rho}-\frac{s-\rho}{R^{2}}\Big)+5\varepsilon_{0}\L.
\end{equation}
Now consider $0<\delta<R-\L^{-1}\log\L$ a parameter to be chosen later, so that all the zeros $\rho$ with $|1+it-\rho|\leq\delta$ are included in \eqref{-reL'/L-leq1}.
Note that
\[\Re\Big(\frac{1}{s-\rho}-\frac{s-\rho}{R^{2}}\Big)=(\sigma-\beta)\Big(\frac{1}{|s-\rho|^{2}}-\frac{1}{R^{2}}\Big)>0\]
for each summand in \eqref{-reL'/L-leq1}, so we may discard all zeros that are not in the smaller disk to obtain
\begin{equation}\label{-reL'/L-leq2}
-\Re\frac{L'}{L}(s,\chi)\leq -\sum_{|1+it-\rho|\leq\delta}\Re\Big(\frac{1}{s-\rho}-\frac{s-\rho}{R^{2}}\Big)+5\varepsilon_{0}\L.
\end{equation}
On the other hand, if $|1+it-\rho|\leq\delta$ then \eqref{eq-range-sigma-t} and the triangle inequality imply
\[\Re\Big(\frac{s-\rho}{R^{2}}\Big)\leq\frac{\sigma-\beta}{R^{2}}\leq\frac{1}{R^{2}}\Big(\frac{\log\L}{\L}+\delta\Big),\]
and the sum in \eqref{-reL'/L-leq2} has $\leq \log\L$ terms by \eqref{eq-range-sigma-t} and the Riemann-von Mangoldt formula.
Thus
\begin{align*}
-\Re\frac{L'}{L}(s,\chi)&\leq -\sum_{|1+it-\rho|\leq\delta}\Re\frac{1}{s-\rho}+\Big(\frac{\log\L}{\L}+\delta\Big)\frac{\log\L}{R^{2}}+5\varepsilon_{0}\L\\
&\leq -\sum_{|1+it-\rho|\leq\delta}\Re\frac{1}{s-\rho}+(6\varepsilon_{0}+\delta)\L
\end{align*}
for $q$ large enough.
The result now follows by taking $\varepsilon_{0}=\varepsilon/7$ and $\delta=\min(1/4,\varepsilon/7)$.
\end{proof}

We now follow a classical approach (see \cite{Davenport-book}*{\S 14}), using \Cref{bound-L'/L} in place of the partial fraction decomposition of $L'/L$, to obtain a zero-free region for a Dirichlet $L$-function $L(s,\chi)$.

\begin{prop}\label{zero-free-LH}
Assume \glh.
For any $\eta\geq 1$, if $q\geq q_{2}(\eta)$, then $\prod_{\chi\mmod{q}}L(s,\chi)$ has at most one zero in the region
\begin{equation}\label{eq-region-zerofree}
\sigma\geq 1-\frac{\eta}{\L},\qquad |t|\leq\L.
\end{equation}
Such a zero, if it exists, is real, simple, and corresponds to a non-principal real character.
\end{prop}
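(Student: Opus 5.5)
The plan is to run the classical $3$--$4$--$1$ argument of \cite{Davenport-book}*{\S 14}, with \cref{bound-L'/L} replacing the partial fraction bound for $-\Re\frac{L'}{L}$. We start from the inequality
\[
3+4\cos\theta+\cos 2\theta\geq 0 .
\]
For $\sigma>1$ it gives
\[
-3\frac{\zeta'}{\zeta}(\sigma)-4\Re\frac{L'}{L}(\sigma+i\gamma,\chi)-\Re\frac{L'}{L}(\sigma+2i\gamma,\chi^{2})\geq 0 .
\]
Here $\frac{L'}{L}(\sigma,\chi_{0})$ differs from $\frac{\zeta'}{\zeta}(\sigma)$ by $O(\log\L)$, which is negligible. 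We also use $-\frac{\zeta'}{\zeta}(\sigma)\leq \frac{1}{\sigma-1}+O(1)$.

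Fix $\eta\geq 1$ and set $\sigma=1+\kappa/\L$, where $\kappa$ is a large constant depending on $\eta$. This lies in the range \eqref{eq-range-sigma-t} for $q$ large. Apply \cref{bound-L'/L} with a small $\varepsilon$ to be chosen in terms of $\eta$. For $t=\gamma$ with $|\gamma|\leq\L$ (so $2|\gamma|\leq 2\L$), we keep only the single zero $\rho=\beta+i\gamma$ in the sum and discard the others. This is allowed because each term $\Re\frac{1}{s-\rho}$ is positive when $\sigma>1\geq\beta$, so dropping terms only increases $-\sum\Re\frac{1}{s-\rho}$.

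The generic case is $\chi^{2}$ non-principal. Then the $\chi^2$ term is at most $\varepsilon\L$, the $\chi$ term is at most $-\frac{1}{\sigma-\beta}+\varepsilon\L$, and we get
\[
0\leq \frac{3\L}{\kappa}-\frac{4\L}{\kappa+\lambda}+5\varepsilon\L+o(\L),
\]
where $\beta=1-\lambda/\L$ with $\lambda\leq\eta$. Choosing $\kappa$ large in terms of $\eta$ makes $\frac{4}{\kappa+\eta}-\frac{3}{\kappa}\asymp 1/\kappa$, and taking $\varepsilon$ a small multiple of $1/\kappa$ gives a contradiction.

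If $\chi^{2}$ is principal, so $\chi$ is real, the $\chi^2$ term behaves like $\zeta$ near $1+2i\gamma$. When $|\gamma|\geq c/\L$ for a suitable constant $c$, this term is at most about $\Re\frac{1}{\sigma-1+2i\gamma}\leq\frac{\L\kappa}{\kappa^2+4c^2}$. The pole must be handled here, since \cref{bound-L'/L} excludes principal characters. I would instead bound $-\Re\frac{\zeta'}{\zeta}$ near $1+it$ directly, using the same Jensen argument applied to $(s-1)\zeta(s)$, or \glh\ away from $s=1$. Choosing $c$ large in terms of $\kappa$ then gives a contradiction as before.

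The remaining zeros are real characters with $|\gamma|<c/\L$, and these are handled by working on the real axis. For a non-real zero $\rho$, its conjugate $\bar\rho$ is also a zero, and both lie within $\delta$ of $1$. Using \cref{bound-L'/L} at $t=0$ with both $\rho,\bar\rho$ in the sum gives $\frac{2(\sigma-\beta)}{(\sigma-\beta)^2+\gamma^2}$ in place of $\frac1{\sigma-\beta}$. Comparing with $-\frac{\zeta'}{\zeta}(\sigma)\leq\frac{\L}{\kappa}$ through the inequality $-\frac{\zeta'}{\zeta}(\sigma)-\Re\frac{L'}{L}(\sigma,\chi)\geq 0$ yields a contradiction when $\kappa$ is large relative to $c$. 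The same inequality rules out a double real zero, because two copies of $\frac{1}{\sigma-\beta}$ outweigh $\frac{1}{\sigma-1}$ for $\kappa$ large.

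It remains to show that two real zeros from distinct characters, or from the same real character, cannot coexist. For distinct real characters $\chi_1\neq\chi_2$ with zeros $\beta_1,\beta_2$, I would use
\[
\zeta(s)L(s,\chi_1)L(s,\chi_2)L(s,\chi_1\chi_2)
\]
together with positivity of $1+\chi_1+\chi_2+\chi_1\chi_2$. This gives
\[
0\leq\frac{\L}{\kappa}-\frac{\L}{\kappa+\lambda_1}-\frac{\L}{\kappa+\lambda_2}+3\varepsilon\L ,
\]
which is false for $\kappa$ large. Here $\chi_1\chi_2$ is non-principal, so \cref{bound-L'/L} applies to it. Two distinct zeros of a single real $\chi$ are excluded by the $\zeta\cdot L(\cdot,\chi)$ comparison above. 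The principal character is excluded because $\zeta$ has no zeros in this region under \glh; this follows from the same $3$--$4$--$1$ argument applied to $\zeta$.

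I expect the main obstacle to be the bookkeeping of the constants $\kappa$, $c$ and $\varepsilon$, together with the treatment of $\zeta$ near its pole. In particular, \cref{bound-L'/L} is stated only for non-principal characters, so I would first establish its analogue for $-\Re\frac{\zeta'}{\zeta}(s)-\Re\frac{1}{s-1}$ by the same Jensen method.
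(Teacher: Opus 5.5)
Your proposal is correct and is essentially the paper's proof: the $3$--$4$--$1$ inequality fed by \cref{bound-L'/L}, the split of real characters according to whether $|\gamma|\geq c/\L$, the conjugate-pair and double-zero arguments on the real axis via $\zeta(s)L(s,\chi)$, and the product $\zeta(s)L(s,\chi_1)L(s,\chi_2)L(s,\chi_1\chi_2)$ for two real characters. Two remarks.

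\emph{Order of constants.} As written, you take $c$ large in terms of $\kappa$ and then need $\kappa$ large relative to $c$ with the same $\sigma=1+\kappa/\L$. This is circular: for $|\gamma|\L$ near $c\geq\kappa$ the conjugate-pair inequality gives no contradiction. So in the small-$|\gamma|$ step use a fresh $\sigma=1+\kappa'/\L$ with $\kappa'\gg c$, and choose $\varepsilon$ last. The paper takes $c=\kappa=10\eta$ and then $\sigma=1+20\eta/\L$.

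\emph{The principal character.} No Jensen-type analogue of \cref{bound-L'/L} for $\zeta$ is needed. The classical bound $-\Re\frac{\zeta'}{\zeta}(s)\leq\Re\frac{1}{s-1}+O(\log(2+|t|))$ handles the $\chi^2=\chi_0$ term. The classical zero-free region for $\zeta$ excludes zeros of $L(s,\chi_0)$.
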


\begin{proof}
We only consider $\chi$ non-principal, as in that case the result follows from known zero-free regions for the Riemann zeta function.
For fixed $\eta\geq 1$, let $\varepsilon=(100\eta)^{-1}$ and take $\delta=\delta(\varepsilon)$ and $q\geq q_{1}(\varepsilon)$ as in \Cref{bound-L'/L}.
We assume there exists a zero $\rho_{0}=\beta_{0}+i\gamma_{0}$ of $L(s,\chi)$ with $1-\beta_{0}\leq\delta$ and $|\gamma_{0}|\leq\L$, as otherwise the statement is immediate for $q$ large enough.
For any zero $\rho=\beta+i\gamma$ of $L(s,\chi)$ and $s=\sigma+it$ with $\sigma>1$, we have
\[\Re\frac{1}{s-\rho}=\frac{\sigma-\beta}{(\sigma-\beta)^{2}+(t-\gamma)^{2}}>0.\]
Thus, applying \Cref{bound-L'/L} to $L(z,\chi)$ and discarding all but the zero $\rho_{0}$ in \eqref{eq-bound-L'/L} we get
\begin{equation}\label{eq-zerofree-igamma}
-\Re\frac{L'}{L}(\sigma+i\gamma_{0},\chi)\leq-\frac{1}{\sigma-\beta_{0}}+\varepsilon\L.
\end{equation}
We would now like to apply \Cref{bound-L'/L} to $L(z,\chi^{2})$, but this is not possible if $\chi$ is real.
We therefore argue separately according to whether $\chi$ is complex or real.\vspace{1ex}

\noindent$\bullet$ $\chi$ \textbf{complex.}
Using \Cref{bound-L'/L} with $L(z,\chi^{2})$ and omitting all zeros in \eqref{eq-bound-L'/L} yields
\begin{equation}\label{eq-zerofree-2igamma}
-\Re\frac{L'}{L}(\sigma+2i\gamma_{0},\chi^{2})\leq\varepsilon\L.
\end{equation}
Combining \eqref{eq-zerofree-igamma} and \eqref{eq-zerofree-2igamma} with the bound
\begin{equation}\label{eq-zerofree-riemann}
-\frac{\zeta'}{\zeta}(\sigma)=\frac{1}{\sigma-1}+O(1)
\end{equation}
and the trigonometric inequality $3+4\cos\theta+\cos 2\theta\geq 0$, we obtain
\begin{align*}
\frac{3}{\sigma-1}-\frac{4}{\sigma-\beta_{0}}+5\varepsilon\L+O(1)&=-3\frac{\zeta'}{\zeta}(\sigma)-4\Re\frac{L'}{L}(\sigma+i\gamma_{0},\chi)-\Re\frac{L'}{L}(\sigma+2i\gamma_{0},\chi^{2})\\
&=\sum_{n=1}^{\infty}\frac{\Lambda(n)}{n^{\sigma}}\Big(3+4\Re\frac{\chi(n)}{n^{i\gamma_{0}}}+\Re\frac{\chi^{2}(n)}{n^{2i\gamma_{0}}}\Big)\geq 0.
\end{align*}
We now write $\sigma=1+y/\L$ for some constant $y>0$, so the inequality above becomes
\[\beta_{0}\leq 1-\frac{y}{\L}\cdot\frac{1-5\varepsilon y+O(\L^{-1})}{3+5\varepsilon y+O(\L^{-1})},\]
and taking $y=(10\varepsilon)^{-1}=10\eta$ and $q$ large enough we get
\[\beta_{0}\leq 1-\frac{y}{8\L}<1-\frac{\eta}{\L}.\]
\noindent$\bullet$ $\chi$ \textbf{real.}
Since $\chi^{2}$ is principal, we instead recall that, for $\sigma\geq 1$, we have
\begin{align*}
\Big|\frac{L'}{L}(s,\chi^{2})-\frac{\zeta'}{\zeta}(s)\Big|=\sum_{p\mid q}\frac{\log p}{|p^{s}-1|}\leq 2\sum_{p\mid q}\frac{\log p}{p}&=2\sum_{\substack{p\mid q\\ p<\L}}\frac{\log p}{p}+2\sum_{\substack{p\mid q\\ p\geq\L}}\frac{\log p}{p}\\
&\leq 2\sum_{p<\L}\frac{\log p}{p}+\frac{2}{\L}\sum_{p\mid q}\log p\ll\log\L.
\end{align*}
Moreover, since $|\gamma_{0}|\leq\L$ and
\[-\Re\frac{\zeta'}{\zeta}(s)\leq\Re\frac{1}{s-1}+O(\log(2+|t|)),\]
we obtain the coarser inequality
\begin{equation}\label{eq-zerofree-2igamma-real}
-\Re\frac{L'}{L}(\sigma+2i\gamma_{0},\chi^{2})\leq\Re\frac{1}{\sigma-1+2i\gamma_{0}}+O(\log\L).
\end{equation}
Repeating the argument as in the complex case, with \eqref{eq-zerofree-2igamma-real} in place of \eqref{eq-zerofree-2igamma}, yields
\[\frac{3}{\sigma-1}-\frac{4}{\sigma-\beta_{0}}+\frac{\sigma-1}{(\sigma-1)^{2}+4\gamma_{0}^{2}}+4\varepsilon\L+O(\log\L)\geq 0.\]
As before, we write $\sigma=1+y/\L$ and this time we additionally assume $|\gamma_{0}|\geq y/\L$, thus the inequality above implies
\[\beta_{0}\leq 1-\frac{y}{\L}\cdot\frac{1-5\varepsilon y+O(\L^{-1}\log\L)}{4+5\varepsilon y+O(\L^{-1}\log\L)},\]
and again taking $y=(10\varepsilon)^{-1}=10\eta$ and $q$ large enough we obtain $\beta_{0}<1-\eta/\L$.
Now it only remains to consider the case when $\chi$ is real and $|\gamma_{0}|<10\eta/\L$.
Assume first $\gamma_{0}\neq 0$, so that $\beta_{0}-i\gamma_{0}$ is also a zero of $L(s,\chi)$.
In this case, using \Cref{bound-L'/L} with $L(z,\chi)$ and discarding all but the zeros $\beta_{0}\pm i\gamma_{0}$ in \eqref{eq-bound-L'/L}, we obtain
\[-\frac{L'}{L}(\sigma,\chi)\leq -\frac{1}{\sigma-\beta_{0}-i\gamma_{0}}-\frac{1}{\sigma-\beta_{0}+i\gamma_{0}}+\varepsilon\L=-\frac{2(\sigma-\beta_{0})}{(\sigma-\beta_{0})^{2}+\gamma_{0}^{2}}+\varepsilon\L,\]
which together with the trivial bound
\[-\frac{L'}{L}(\sigma,\chi)=\sum_{n=1}^{\infty}\frac{\Lambda(n)\chi(n)}{n^{\sigma}}\geq -\sum_{n=1}^{\infty}\frac{\Lambda(n)}{n^{\sigma}}=\frac{\zeta'}{\zeta}(\sigma)=-\frac{1}{\sigma-1}+O(1)\]
gives
\begin{equation}\label{eq-zerofree-real1}
\frac{1}{\sigma-1}-\frac{2(\sigma-\beta_{0})}{(\sigma-\beta_{0})^{2}+\gamma_{0}^{2}}+\varepsilon\L+O(1)\geq 0.
\end{equation}
We now choose $\sigma=1+20\eta/\L$, and since $|\gamma_{0}|<10\eta/\L=(\sigma-1)/2<(\sigma-\beta_{0})/2$, \eqref{eq-zerofree-real1} implies
\[\beta_{0}\leq 1-\frac{20\eta}{\L}\cdot\frac{3-100\varepsilon\eta+O(\L^{-1})}{5+100\varepsilon\eta+O(\L^{-1})}<1-\frac{\eta}{\L}.\]
Lastly, suppose $L(z,\chi)$ has two (possibly equal, in which case we mean it has multiplicity two) real zeros $\beta_{1},\beta_{2}$ with $1-\beta_{j}\leq\delta$, as otherwise the statement is immediate for $q$ large, as before.
Proceeding as above, we get
\[-\frac{1}{\sigma-1}+O(1)\leq\frac{L'}{L}(\sigma,\chi)\leq -\frac{1}{\sigma-\beta_{1}}-\frac{1}{\sigma-\beta_{2}}+\varepsilon\L\leq -\frac{2}{\sigma-\min(\beta_{1},\beta_{2})}+\varepsilon\L,\]
and taking $\sigma=1+2\eta/\L$ we conclude
\begin{equation}\label{eq-two-real}
\min(\beta_{1},\beta_{2})\leq 1-\frac{2\eta}{\L}\cdot\frac{1-2\varepsilon\eta+O(\L^{-1})}{1+2\varepsilon\eta+O(\L^{-1})}<1-\frac{\eta}{\L}.
\end{equation}
In summary, we have shown that, given a non-principal character $\chi$, $L(z,\chi)$ has no zeros in the region \eqref{eq-region-zerofree} when $\chi$ is complex, and can have at most one zero in such region when $\chi$ is real.
Moreover, if such a zero exists, it must be real and simple by \eqref{eq-two-real}.

Hence, to complete the proof, it remains to show that, for a given $q$, there exists at most one such character.
Suppose $\chi_{1}$ and $\chi_{2}$ are two different non-principal real characters modulo $q$ such that the functions $L(z,\chi_{1})$ and $L(z,\chi_{2})$ have two real zeros $\beta_{1}$, $\beta_{2}$, respectively, with $1-\beta_{j}\leq\delta$.
Then $\chi_{1}\chi_{2}$ is a non-principal character modulo $q$, thus applying \Cref{bound-L'/L} to $L(z,\chi_{1}\chi_{2})$ and discarding all zeros in \eqref{eq-bound-L'/L} yields
\begin{equation}\label{eq-two-chi}
-\frac{L'}{L}(\sigma,\chi_{1}\chi_{2})\leq\varepsilon\L.
\end{equation}
Similarly, we use \Cref{bound-L'/L} with each $L(z,\chi_{j})$, omitting all but the zero $\beta_{j}$ in \eqref{eq-bound-L'/L}, to obtain
\begin{equation}\label{eq-each-chi}
-\frac{L'}{L}(\sigma,\chi_{j})\leq -\frac{1}{\sigma-\beta_{j}}+\varepsilon\L.
\end{equation} 
Finally, from \eqref{eq-zerofree-riemann}, \eqref{eq-two-chi}, and \eqref{eq-each-chi} we deduce
\begin{align*}
\frac{1}{\sigma-1}-\frac{1}{\sigma-\beta_{1}}-\frac{1}{\sigma-\beta_{2}}+3\varepsilon\L+O(1)&\geq -\frac{\zeta'}{\zeta}(\sigma)-\frac{L'}{L}(\sigma,\chi_{1})-\frac{L'}{L}(\sigma,\chi_{2})-\frac{L'}{L}(\sigma,\chi_{1}\chi_{2})\\
&=\sum_{n=1}^{\infty}\frac{\Lambda(n)}{n^{\sigma}}\big(1+\chi_{1}(n)\big)\big(1+\chi_{2}(n)\big)\geq 0,
\end{align*}
thus
\[\frac{2}{\sigma-\min(\beta_{1},\beta_{2})}\leq \frac{1}{\sigma-\beta_{1}}+\frac{1}{\sigma-\beta_{2}}\leq\frac{1}{\sigma-1}+3\varepsilon\L+O(1).\]
Taking $\sigma=1+2\eta/\L$ we get
\[\min(\beta_{1},\beta_{2})\leq 1-\frac{2\eta}{\L}\cdot\frac{1-6\varepsilon\eta+O(\L^{-1})}{1+6\varepsilon\eta+O(\L^{-1})}<1-\frac{\eta}{\L},\]
which concludes the proof.
\end{proof}

\subsection{Two zero-density estimates}

In this section we establish two zero-density estimates that will be used later: a log-free estimate valid in the strip $1/2\leq\sigma\leq 1$, and a much sharper estimate for the number of characters whose $L$-function has a zero very close to the line $\sigma=1$.\vspace{1ex}

Our first result is the assertion that \glh~\!\! implies the density hypothesis for Dirichlet $L$-functions.
This appears to be folklore, but we were unable to find a proof in the literature, so we provide one for the sake of completeness.
The proof relies on the following estimate for Dirichlet polynomials.

\begin{lem}\label{dirichlet-poly}
Let $k,m\geq 1$ be integers, $N,Q,T\geq 1$, $V>0$, and $1/2\leq\alpha\leq 1$.
Let $a_{n}$ be complex numbers, and define the Dirichlet polynomial
\[D(s,\chi)=\sum_{N<n\leq 2N}\frac{a_{n}\chi(n)}{n^{s}}.\]
Let $\mathcal{C}=\mathcal{C}(k,Q)$ be the set of Dirichlet characters $\chi=\psi\xi$, where $\psi$ is a primitive character modulo $q$ for some $q\leq Q$ with $(q,k)=1$, and $\xi$ is a character modulo $k$.
Suppose $\mathcal{S}$ is a set of pairs $(\rho,\chi)$ such that $\chi\in\mathcal{C}$ and $\rho=\beta+i\gamma$ is a zero of $L(s,\chi)$ with $\beta\geq\alpha$ and $|\gamma|\leq T$.
Assume further that $\mathcal{S}$ satisfies the following conditions:
\begin{itemize}
\item For every pair $(\rho,\chi)\in\mathcal{S}$, we have $|D(\rho,\chi)|\geq V$.
\item If $(\rho_{1},\chi_{1}),(\rho_{2},\chi_{2})\in\mathcal{S}$ are distinct, then either $\chi_{1}\neq\chi_{2}$, or $\chi_{1}=\chi_{2}$ and $|\gamma_{1}-\gamma_{2}|\geq 1$.
\end{itemize}
Then
\[|\mathcal{S}|\ll (H+N^{m})\big(G_{m}(\alpha)V^{-2}\big)^{m}(\log HN^{m})^{5},\]
where $H=kQ^{2}T$, $\tau_{m}$ is the $m$-fold divisor function, and
\[G_{m}(\alpha)=\sum_{N<n\leq 2N}\frac{|a_{n}|^{2}\tau_{m}(n)}{n^{2\alpha}}.\]
\end{lem}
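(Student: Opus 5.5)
The plan is to raise $D$ to the $m$-th power and apply a hybrid large sieve for well-spaced points. Set
\[D^{m}(s,\chi)=\sum_{N^{m}<n\leq 2^{m}N^{m}}\frac{b_{n}\chi(n)}{n^{s}},\qquad b_{n}=\sum_{n_{1}\cdots n_{m}=n}a_{n_{1}}\cdots a_{n_{m}},\]
which is a Dirichlet polynomial of length $\asymp N^{m}$. By hypothesis, $|D^{m}(\rho,\chi)|\geq V^{m}$ for every $(\rho,\chi)\in\mathcal{S}$, so
\[|\mathcal{S}|V^{2m}\leq\sum_{(\rho,\chi)\in\mathcal{S}}|D^{m}(\rho,\chi)|^{2}.\]

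The first step is to remove the dependence on $\beta$. I will write $n^{-\rho}=n^{-\alpha}n^{-(\beta-\alpha)}n^{-i\gamma}$ and use partial summation in $n$ (equivalently, a Gallagher-type integral $n^{-(\beta-\alpha)}=\ldots$). This bounds $|D^{m}(\rho,\chi)|$ by $\ll$ the maximum over $u\in[N^{m},2^{m}N^{m}]$ of the partial sums
\[\Big|\sum_{N^{m}<n\leq u}\frac{b_{n}\chi(n)}{n^{\alpha+i\gamma}}\Big|,\]
using that $n^{-(\beta-\alpha)}$ is monotone and bounded by $1$. The maximum over $u$ is then handled by a Perron/truncation device that introduces an $\int\ldots\frac{dt}{1+|t|}$ over $|t|\ll HN^{m}$, at the cost of a few powers of $\log(HN^{m})$. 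This is where some of the five logarithms come from.

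The main step is a large sieve inequality over the family $\mathcal{C}(k,Q)$ with points $\gamma$ that are $1$-spaced for each fixed character:
\[\sum_{(\rho,\chi)\in\mathcal{S}}\Big|\sum_{n}\frac{c_{n}\chi(n)}{n^{i\gamma}}\Big|^{2}\ll (kQ^{2}T+N^{m})\log(\ldots)\sum_{n}|c_{n}|^{2}.\]
I would derive this from the hybrid large sieve of Gallagher (the $Q^{2}T$ form). The characters $\chi=\psi\xi$ with $\psi$ primitive of conductor $q\leq Q$ coprime to $k$ and $\xi$ modulo $k$ are all distinct, and each is induced by a primitive character of conductor at most $kQ$. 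Better, the standard argument (Montgomery, \emph{Topics}, or Iwaniec--Kowalski Theorem 9.12 and its variants with the $k$-aspect) gives the count $kQ^{2}T$ rather than $k^{2}Q^{2}T$: one sums over the $\phi(k)$ characters $\xi$ via orthogonality modulo $k$, and then applies the primitive large sieve in $\psi$ over arithmetic progressions. The continuous-to-discrete passage for $1$-spaced $\gamma$ uses Gallagher's lemma and Sobolev, giving one more log. I expect this step to be the main obstacle: obtaining the factor $kQ^{2}$ rather than $(kQ)^{2}$ requires care. If needed, I would cite the known large sieve for $\mathcal{C}(k,Q)$ used by Heath-Brown in his work on Linnik's theorem.

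Finally, I will take $c_{n}=b_{n}n^{-\alpha}$. Cauchy--Schwarz on the $m$-fold convolution gives $|b_{n}|^{2}\leq\tau_{m}(n)\sum_{n_{1}\cdots n_{m}=n}|a_{n_{1}}|^{2}\cdots|a_{n_{m}}|^{2}$, hence
\[\sum_{n}\frac{|b_{n}|^{2}}{n^{2\alpha}}\leq\sum_{n_{1},\ldots,n_{m}}\prod_{j}\frac{|a_{n_{j}}|^{2}\tau_{m}(n_{1}\cdots n_{m})}{n_{j}^{2\alpha}}.\]
Using submultiplicativity, $\tau_{m}(n_{1}\cdots n_{m})\leq\prod_{j}\tau_{m}(n_{j})$, this is at most $G_{m}(\alpha)^{m}$. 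Combining the three steps,
\[|\mathcal{S}|V^{2m}\ll(H+N^{m})G_{m}(\alpha)^{m}(\log HN^{m})^{5},\]
which is the claim.
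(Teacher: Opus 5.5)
Your proposal is correct and follows the paper's route: the paper's proof is the single remark that the lemma is Iwaniec--Kowalski, Theorem 9.16 (a large-sieve-type bound for Dirichlet polynomials at well-spaced zeros over the family $\mathcal{C}(k,Q)$) applied to $D(s,\chi)^{m}$, with the coefficients of $D^{m}$ controlled exactly as you do, via Cauchy--Schwarz and $\tau_{m}(n_{1}\cdots n_{m})\leq\tau_{m}(n_{1})\cdots\tau_{m}(n_{m})$. Your partial-summation/Perron step and your argument for the factor $kQ^{2}T+N^{m}$ (orthogonality over $\xi$, then the hybrid large sieve over primitive $\psi$ for sequences supported on progressions modulo $k$) are a sketch of how that cited theorem is proved, rather than something the paper carries out.
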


\begin{proof}
This is precisely \cite{Iwaniec-Kowalski}*{Theorem 9.16} applied to the polynomial $D(s,\chi)^{m}$, where we used that $\tau_{m}(n_{1}\cdots n_{m})\leq\tau_{m}(n_{1})\cdots\tau_{m}(n_{m})$.
\end{proof}

With this preliminary lemma in place, we proceed to establish our first zero-density estimate, which in turn implies the log-free estimate required for the proof of \cref{main-thm}.

\begin{prop}\label{glh-implies-density}
Let $k,Q\geq 1$, with $k$ an integer.
For $\chi\pmod{\ell}$ a Dirichlet character, let $N(\alpha,T,\chi)$ denote the number of zeros $\rho=\beta+i\gamma$ of $L(s,\chi)$ with $\beta\geq\alpha$ and $|\gamma|\leq T$, counted with multiplicity.
Define
\[N(\alpha,k,Q,T)=\sum_{\substack{q\leq Q\\ (q,k)=1}}\sum_{\substack{\psi\mmod{q}\\ \psi~\!\mathrm{ primitive}}}\sum_{\xi\mmod{k}}N(\alpha,T,\xi\psi).\]
Assuming \glh, then for all $\varepsilon>0$ we have
\[N(\alpha,k,Q,T)\ll_{\varepsilon}(kQ^{2}T)^{2(1-\alpha)+\varepsilon}\]
for $1/2\leq\alpha\leq 1$.
\end{prop}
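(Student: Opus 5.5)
We follow the classical zero-detection method (as in Montgomery's approach to density estimates), using \glh{} to make the detecting polynomial essentially as short as possible. Write $H=kQ^{2}T$, fix $\varepsilon>0$, and assume $\alpha\geq 1/2+\varepsilon$. In the remaining range $1/2\leq\alpha<1/2+\varepsilon$ the trivial bound $N(\alpha,k,Q,T)\ll H\log H$ from the Riemann--von Mangoldt formula already gives $H^{2(1-\alpha)+2\varepsilon}$. By the usual count of zeros in unit intervals (each $L(s,\chi)$ has $\ll\log H$ zeros with $|\gamma-t|\leq 1$), it suffices to bound the size of a well-spaced set $\mathcal{S}$ as in \cref{dirichlet-poly}, at the cost of a factor $\log H$.

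\textbf{Zero detector.} Let $M_{X}(s,\chi)=\sum_{n\leq X}\mu(n)\chi(n)n^{-s}$ with $X=H^{\varepsilon}$. Then $L(s,\chi)M_{X}(s,\chi)=1+\sum_{n>X}a_{n}\chi(n)n^{-s}$, where $a_n=\sum_{d\mid n,\,d\leq X}\mu(d)$. We smooth with $e^{-n/Y}$ and use the Mellin transform:
\[e^{-1/Y}+\sum_{n>X}\frac{a_{n}\chi(n)}{n^{\rho}}e^{-n/Y}=\frac{1}{2\pi i}\int_{(2)}L(\rho+w,\chi)M_{X}(\rho+w,\chi)\Gamma(w)Y^{w}\,dw.\]
Shift the contour to $\Re w=1/2-\beta<0$ (for $\chi$ principal, after accounting for the pole, which is negligible in the main range or can be handled separately). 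On the new line, \glh{} gives $L\ll H^{\varepsilon}(1+|v|)^{\varepsilon}$ and $M_{X}\ll X^{1/2}$. Since $\Gamma$ decays exponentially, the shifted integral is $\ll H^{2\varepsilon}Y^{1/2-\beta}$. The residue at $w=0$ vanishes because $L(\rho,\chi)=0$. Choosing $Y=H^{C\varepsilon}$ for a large enough constant $C$, and using $\beta-1/2\geq\varepsilon$, makes this term $\leq 1/4$. The tail $n>Y H^{\varepsilon}$ is negligible. Hence, after dyadic decomposition, each zero in $\mathcal{S}$ satisfies $|D_{N}(\rho,\chi)|\gg 1/\log H$ for some dyadic block $D_{N}$ with $X<N\leq Y^{1+\varepsilon}$ and coefficients $a_{n}e^{-n/Y}$, where $|a_{n}|\leq\tau(n)$. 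By pigeonhole, we fix one $N$ capturing $\gg|\mathcal{S}|/\log H$ zeros.

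\textbf{Large values.} Apply \cref{dirichlet-poly} with $V\asymp 1/\log H$, choosing $m$ so that $N^{m}\asymp H$, i.e.\ $m=\lceil\log H/\log N\rceil$. This is possible with $m\ll 1/\varepsilon$ since $N\geq X=H^{\varepsilon}$; rounding up can only increase $N^m$ to at most $H\cdot N\leq H^{1+O(\varepsilon)}$. Here $G_{m}(\alpha)\ll N^{1-2\alpha}(\log N)^{B_{m}}$, so
\[|\mathcal{S}|\ll H^{1+O(\varepsilon)}N^{m(1-2\alpha)}(\log H)^{O_{\varepsilon}(1)}\ll H^{1+(1-2\alpha)+O(\varepsilon)}=H^{2(1-\alpha)+O(\varepsilon)}.\]
Renaming $\varepsilon$ gives the claim.

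\textbf{Main obstacle.} The delicate step is ensuring that the exponent of $H$ is exactly $1+m(1-2\alpha)$ with $N^{m}\asymp H$. Since $1-2\alpha\leq0$, taking $N^m$ slightly larger than $H$ only helps the factor $N^{m(1-2\alpha)}$ while costing $H^{O(\varepsilon)}$ in $H+N^m$, so the rounding is harmless. Equally important is keeping the $\tau_{m}$ and $\log$ losses at $H^{o(1)}$, which holds because $m$ is bounded in terms of $\varepsilon$. The real input of \glh{} is that it allows the detector length $X$ to be $H^{\varepsilon}$ rather than $H^{1/2}$, which is what would be needed unconditionally.
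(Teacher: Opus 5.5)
Your overall architecture (mollified zero detector, dyadic blocks, \cref{dirichlet-poly} with $N^{m}\approx H$, trivial bound near $\alpha=1/2$) matches the paper's. However, the parameter choice in your zero detector does not work, and this breaks the large-values step.

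You bound the shifted integral by $H^{2\varepsilon}Y^{1/2-\beta}$, and the only information you have on $\beta$ is $\beta-1/2\geq\varepsilon$. With $Y=H^{C\varepsilon}$ this is $H^{2\varepsilon-C\varepsilon^{2}}$, which is small only if $C>2/\varepsilon$, i.e.\ $Y\geq H^{2}$. So $C$ cannot be an absolute constant. Shrinking the GLH exponent does not rescue the choice $X=H^{\varepsilon}$. The factor $X^{1/2}=H^{\varepsilon/2}$ from bounding $M_{X}$ on the line $\Re(\rho+w)=1/2$ is intrinsic, so you need $Y^{\beta-1/2}\geq H^{\varepsilon/2}$, i.e.\ $Y\geq H^{1/2}$ when $\beta$ is close to $1/2+\varepsilon$.

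In either case the dyadic blocks include $N=H^{\theta}$ for a fixed $\theta>0$, and your claim $N^{m}\leq HN\leq H^{1+O(\varepsilon)}$ is false. The bound \cref{dirichlet-poly} provides is then off by a fixed power of $H$. For instance, with $N=H^{2/5}$ and $\alpha=3/4$, the best choice $m\in\{2,3\}$ gives only $|\mathcal{S}|\ll H^{3/5+o(1)}=H^{2(1-\alpha)+1/10+o(1)}$. For blocks with $N\approx H^{2}$, the only available choice $m=1$ gives $H^{4(1-\alpha)}$. Getting the density exponent from this lemma requires every block to have length $H^{\eta(\varepsilon)}$ with $\eta(\varepsilon)\to 0$.

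What is missing is a balancing of three scales: the threshold $\alpha_{0}$ below which you use the trivial bound, the mollifier length $X$, and the smoothing length $Y$. You need the gain $Y^{\alpha_{0}-1/2}$ to beat the loss $X^{1/2}H^{\varepsilon'}$. At the same time $X$ and $Y$ must both be $H^{\eta(\varepsilon)}$ with $\eta(\varepsilon)\to0$, and $X\geq H^{\varepsilon}$ so that $m\ll\varepsilon^{-1}$.

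For example, take $\alpha_{0}=1/2+\sqrt{\varepsilon}$, $X=H^{\varepsilon}$, $Y=H^{\sqrt{\varepsilon}}$, and apply GLH with exponent $\varepsilon/10$. Then:
\begin{itemize}
\item the shifted integral is $\ll H^{\varepsilon/10+\varepsilon/2-\varepsilon}=H^{-2\varepsilon/5}$;
\item all relevant blocks satisfy $H^{\varepsilon}<N\leq H^{2\sqrt{\varepsilon}}$, so $m\leq 1+\varepsilon^{-1}$ and $H\leq N^{m}\leq H^{1+2\sqrt{\varepsilon}}$;
\item you obtain $H^{2(1-\alpha)+2\sqrt{\varepsilon}}(\log H)^{O(\varepsilon^{-2})}$, while the trivial bound covers $\alpha\leq 1/2+\sqrt{\varepsilon}$ with the same loss.
\end{itemize}

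This $\sqrt{\varepsilon}$-balancing is what the paper does: threshold $1/2+2\sqrt{\varepsilon}$ and total detector length $H^{O(\sqrt{\varepsilon})}$. The paper also organizes the detector slightly differently. It first proves $L(s,\chi)=\sum_{n\leq X}\chi(n)n^{-s}+O(X^{1/2-\sigma}H^{\varepsilon/2})$ by Perron's formula and a contour shift. Only then does it multiply by the mollifier evaluated at $s$ itself. As a result the mollifier, of length $H^{\varepsilon}$ there, costs only $H^{\varepsilon(1-\sigma)}$ rather than the $H^{\varepsilon/2}$ you incur on the half-line.
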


\begin{cor}\label{log-free-density}
Assuming \glh, then for all $\varepsilon>0$ we have
\begin{equation}\label{eq-log-free-density}
\sum_{\chi\mmod{q}}N(\alpha,T,\chi)\ll_{\varepsilon}(qT)^{(2+\varepsilon)(1-\alpha)}
\end{equation}
for $1/2\leq\alpha\leq 1$.
\end{cor}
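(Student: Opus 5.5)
The estimate \eqref{eq-log-free-density} will be obtained by splitting the range of $\alpha$ at a small threshold $1-\eta$, with $\eta=\eta(\varepsilon)>0$ fixed. Away from $\sigma=1$, \Cref{glh-implies-density} is enough. Near $\sigma=1$ it is not, because its loss $(qT)^{\varepsilon}$ is not log-free. There we would use a log-free estimate whose exponent is already $(2+\varepsilon)(1-\alpha)$.

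\emph{Step 1: specialising \Cref{glh-implies-density}.} We take $k=q$ and $Q=1$. The only modulus $q'\leq 1$ is $q'=1$, which is coprime to $q$, and its only primitive character is trivial. Hence the characters $\xi\psi$ run exactly over all $\xi\pmod{q}$, and
\[N(\alpha,q,1,T)=\sum_{\chi\mmod{q}}N(\alpha,T,\chi).\]
The parameter $H=kQ^{2}T$ equals $qT$. So for every $\varepsilon'>0$ the proposition gives $\sum_{\chi}N(\alpha,T,\chi)\ll_{\varepsilon'}(qT)^{2(1-\alpha)+\varepsilon'}$ for $1/2\leq\alpha\leq 1$.

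\emph{Step 2: the range $1-\alpha\geq\eta$.} We apply Step 1 with $\varepsilon'=\varepsilon\eta$. Since $\varepsilon\eta\leq\varepsilon(1-\alpha)$ in this range, we get $(qT)^{2(1-\alpha)+\varepsilon\eta}\leq(qT)^{(2+\varepsilon)(1-\alpha)}$. The implied constant depends only on $\varepsilon$ and $\eta$, hence only on $\varepsilon$ once $\eta$ is fixed in Step 3.

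\emph{Step 3: the range $1-\eta<\alpha\leq 1$ (main obstacle).} Here the factor $(qT)^{\varepsilon'}$ cannot be absorbed for any fixed $\varepsilon'$. The worst case is $1-\alpha\asymp 1/\log(qT)$, where the target is $O(1)$. Also, \Cref{dirichlet-poly} carries a $(\log HN^{m})^{5}$ loss, which is fatal in that range. My first attempt would be to invoke the unconditional log-free density estimate of Heath-Brown near the line $\sigma=1$ (\cite{Heath-Brown-Linnik}). As I recall it, it says that for every $\varepsilon>0$ there is $\eta(\varepsilon)>0$ with
\[\sum_{\chi\mmod{q}}N(\alpha,T,\chi)\ll_{\varepsilon}(qT)^{(2+\varepsilon)(1-\alpha)}\qquad(1-\eta\leq\alpha\leq1).\]
Its proof uses a zero-detecting Dirichlet polynomial with a Selberg-type mollifier and the Turán power sum method. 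No \glh{} is needed for this step. Choosing this $\eta$ in Step 2 completes the argument.

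If that exact form of the citation is unavailable, I would reprove the near-$1$ range directly, and this is the part I expect to be delicate. The plan would be as follows. First, detect zeros with a mollified polynomial $\sum_{z<n\leq z^{A}}a_{n}\chi(n)n^{-\rho}$ whose coefficients come from a Selberg sieve weight, so that the mean values in \Cref{dirichlet-poly} are bounded by $O(1)$ rather than by powers of $\log$. Second, replace the $\log$-power losses by a large-sieve inequality of Gallagher type with length $N\approx(qT)^{1+\varepsilon}$. This would give a count $\ll(qT)^{(2+\varepsilon)(1-\alpha)}$ with no logarithmic factor. In doing so, \glh{} could be used to shorten the approximate functional equation if needed.
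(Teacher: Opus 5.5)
Your proposal is correct and matches the paper's proof. The paper handles $1/2\leq\alpha\leq 4/5$ with \cref{glh-implies-density} at $Q=1$ (and $k=q$), exactly as in your Steps 1--2, and near $\sigma=1$ it quotes an unconditional log-free estimate with exponent $(2+\varepsilon)(1-\alpha)$. The reference it uses is Jutila's Theorem 1 \cite{Jutila-Linnik-1977}, not Heath-Brown; it is valid for $4/5\leq\alpha\leq 1$, so you can take $\eta=1/5$, and the fallback sketch in your Step 3 is not needed.
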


\begin{proof}[Proof of \cref{log-free-density} assuming \cref{glh-implies-density}]
In the range $4/5\leq\alpha\leq 1$ this is precisely the log-free zero-density estimate of Jutila \cite{Jutila-Linnik-1977}*{Theorem 1}, while for the remaining range $1/2\leq\alpha\leq 4/5$ the result follows from setting $Q=1$ in \cref{glh-implies-density}.
\end{proof}

\begin{proof}[Proof of \cref{glh-implies-density}]
We roughly follow \cite{Iwaniec-Kowalski}*{\S 10.3}, and to simplify the notation we write $H=kQ^{2}T$.
First, note that there is exactly one principal character contributing to our sum $N(\alpha,k,Q,T)$.
Denoting this character by $\chi_{0}$, the zeros of $L(s,\chi_{0})$ in the region $\sigma\geq 1/2$ coincide with those of $\zeta(s)$.
Under \glh, which includes the Lindel\"of hypothesis for $\zeta(s)$, it is a well known theorem of Ingham \cite{Ingham-Lindelof}*{Theorem 3} that $N(\alpha,T,\zeta)\ll_{\varepsilon}T^{2(1-\alpha)+\varepsilon}$.
As $T\leq H$, this contribution is $\ll_{\varepsilon}H^{2(1-\alpha)+\varepsilon}$, which will be absorbed into our final upper bound.
Therefore, we may restrict our attention to non-principal characters $\chi$ modulo $\ell=kq$.

Let $2\leq X\leq H$, $\delta=1/\log H$, $s=\sigma+it$, with $1/2\leq\sigma\leq 1$ and $|t|\leq T$, and fix $0<\varepsilon<0.01$.
By Perron's formula we have
\[\sum_{n\leq X}\frac{\chi(n)}{n^{s}}=\frac{1}{2\pi i}\int_{1-\sigma+\delta-iH}^{1-\sigma+\delta+iH}L(s+w,\chi)\frac{X^{w}}{w}\,dw+R,\]
where
\[R\ll X^{-\sigma}+\frac{X^{1+\delta-\sigma}\log X}{H}\ll X^{1/2-\sigma}.\]
Assuming \glh, we have $L(s+w,\chi)\ll_{\varepsilon}H^{\varepsilon/4}$ on our contour, while on the segment $\Re(w)=1/2-\sigma-\delta$, $|\Im(w)|\leq H$, the functional equation for the character inducing $\chi$ combined with \glh~\! yields the same bound.
Therefore, by the Phragmen--Lindel\"of principle this bound extends to the entire rectangle $1/2-\sigma-\delta\leq\Re(w)\leq 1-\sigma+\delta$, $|\Im(w)|\leq H$.
With this uniform bound, we shift the contour of integration to the line $\Re(w)=1/2-\sigma-\delta$, and we pick up a simple pole at $w=0$ with residue $L(s,\chi)$.
Moreover, the contribution from the horizontal integrals is
\[\ll\int_{1/2-\sigma-\delta}^{1-\sigma+\delta}\frac{H^{\varepsilon/4}X^{u}}{H}\,du\ll\frac{X^{1-\sigma+\delta}H^{\varepsilon/4}}{H}\ll X^{1/2-\sigma},\]
and similarly the integral over the left edge $\Re(w)=1/2-\sigma-\delta$ is
\[\ll\int_{-H}^{H}\frac{H^{\varepsilon/4}X^{1/2-\sigma-\delta}}{|1/2-\sigma-\delta+iv|}\,dv\ll X^{1/2-\sigma-\delta}H^{\varepsilon/4}\log H\ll_{\varepsilon}X^{1/2-\sigma}H^{\varepsilon/2}.\]
In conclusion, we obtain
\[L(s,\chi)=\sum_{n\leq X}\frac{\chi(n)}{n^{s}}+O_{\varepsilon}\big(X^{1/2-\sigma}H^{\varepsilon/2}\big).\]
Consider now a parameter $Y\geq 2$ and define the mollifier
\[M(s,\chi)=\sum_{n\leq Y}\frac{\mu(n)\chi(n)}{n^{s}}.\]	
Using the trivial bound $M(s,\chi)\ll Y^{1-\sigma}\log Y$ we obtain
\[L(s,\chi)M(s,\chi)=\sum_{n\leq XY}\frac{a_{n}\chi(n)}{n^{s}}+O_{\varepsilon}\big(X^{1/2-\sigma}Y^{1-\sigma}H^{\varepsilon/2}\log Y\big),\]
where
\begin{equation}\label{def-an}
a_{n}=\sum_{\substack{d\mid n\\ n/X\leq d\leq Y}}\mu(d),\qquad |a_{n}|\leq\tau(n).
\end{equation}
Note that $a_{1}=1$, and assuming $Y\leq X$ we have $a_{n}=\sum_{d\mid n}\mu(d)=0$ for $1<n\leq Y$.
We now decompose the interval $Y<n\leq XY$ into dyadic subintervals $N_{j}<n\leq 2N_{j}$, with $N_{j}=2^{j}Y$, $0\leq j<J=\log X/\log 2$.
For each $j$ we denote
\[D_{j}(s,\chi)=\sum_{N_{j}<n\leq 2N_{j}}\frac{a_{n}\chi(n)}{n^{s}},\]
thus
\begin{equation}\label{zero-detector}
L(s,\chi)M(s,\chi)=1+\sum_{0\leq j<J}D_{j}(s,\chi)+O_{\varepsilon}\big(X^{1/2-\sigma}Y^{1-\sigma}H^{\varepsilon/2}\log Y\big).
\end{equation}
We now require $X^{\sigma-1/2}\geq Y^{1-\sigma}H^{\varepsilon}$, so that if $\rho$ is a zero of $L(s,\chi)$ in the region under consideration, and $H$ is large enough, then \eqref{zero-detector} yields
\[\Big|\sum_{0\leq j<J}D_{j}(\rho,\chi)\Big|\geq\frac{1}{2}.\]
In particular, this implies $|D_{j}(\rho,\chi)|\geq (2J)^{-1}$ for some $0\leq j<J$.

With the notation of \cref{dirichlet-poly}, let $\mathcal{Z}_{j}$ be the multiset of pairs $(\rho,\chi)$ such that $\chi\in\mathcal{C}$, $\rho=\beta+i\gamma$ is a zero of $L(s,\chi)$ with $\beta\geq\alpha$ and $|\gamma|\leq T$, counted with multiplicity, and $|D_{j}(\rho,\chi)|\geq(2J)^{-1}$.
From $\mathcal{Z}_{j}$ we extract a subset $\mathcal{S}_{j}$ that satisfies the conditions of \cref{dirichlet-poly} as follows.
For each $\chi\in\mathcal{C}$, consider the zeros $\rho=\beta+i\gamma$ such that $(\rho,\chi)\in\mathcal{Z}_{j}$, and choose a maximal subset $\mathcal{S}_{j}(\chi)$ with the property that any two distinct zeros $\rho_{1},\rho_{2}$ in this subset satisfy $|\gamma_{1}-\gamma_{2}|\geq 1$.
We then define $\mathcal{S}_{j}=\bigcup_{\chi\in\mathcal{C}}\{(\rho,\chi):\rho\in\mathcal{S}_{j}(\chi)\}$.
Because our selection for each character is maximal, for every $(\rho,\chi)\in\mathcal{Z}_{j}$ there exists $(\rho',\chi)\in\mathcal{S}_{j}$ with $|\gamma-\gamma'|\leq 1$.
Moreover, by standard estimates the function $L(s,\chi)$ has $\ll\log(kQT)$ zeros, counted with multiplicity, whose imaginary part $\gamma$ satisfy $|\gamma-\gamma'|\leq 1$.
Thus $|\mathcal{Z}_{j}|\ll |\mathcal{S}_{j}|\log H$.
Recall also that the contribution to $N(\alpha,k,Q,T)$ from the principal character is $\ll_{\varepsilon}H^{2(1-\alpha)+\varepsilon}$, and the zeros of the remaining characters are all contained in some $\mathcal{Z}_{j}$.
Therefore
\begin{equation}\label{final-sum-1}
N(\alpha,k,Q,T)\ll_{\varepsilon}H^{2(1-\alpha)+\varepsilon}+\sum_{0\leq j<J}|\mathcal{Z}_{j}|\ll H^{2(1-\alpha)+\varepsilon}+J\log H\max_{0\leq j<J}|\mathcal{S}_{j}|.
\end{equation}
To bound $|\mathcal{S}_{j}|$ using \cref{dirichlet-poly}, we will raise the polynomials $D_{j}(s,\chi)$ to appropriate powers to optimize their lengths.
We do this by introducing a parameter $Z$, with $(XY)^{1/\sqrt{\varepsilon}}\leq Z\leq H$, to be chosen later.
As $2\leq N_{j}\leq XY\leq Z^{\sqrt{\varepsilon}}$, for each $0\leq j<J$ let $m_{j}$ be the unique integer satisfying $N_{j}^{m_{j}-1}\leq Z<N_{j}^{m_{j}}$, so that
\begin{equation}\label{ineq-N^m}
Z<N_{j}^{m_{j}}=N_{j}^{m_{j}-1}N_{j}\leq Z^{1+\sqrt{\varepsilon}}.
\end{equation}
To simplify the notation, from now on we write $P_{j}=N_{j}^{m_{j}}$.
Using \eqref{def-an} and the estimate
\[\sum_{N<n\leq 2N}\tau(n)^{2}\tau_{m}(n)\ll N(\log N)^{4m-1}\]
we obtain
\[\Big(\sum_{N_{j}<n\leq 2N_{j}}\frac{|a_{n}|^{2}\tau_{m_{j}}(n)}{n^{2\alpha}}\Big)^{m_{j}}\ll P_{j}^{1-2\alpha}(\log N_{j})^{m_{j}(4m_{j}-1)}.\]
Thus \cref{dirichlet-poly} implies
\[|\mathcal{S}_{j}|\ll(H+P_{j})P_{j}^{1-2\alpha}(\log H)^{A_{j}},\]
where $A_{j}=m_{j}(4m_{j}+1)+5$.
Applying the bounds for $P_{j}$ from \eqref{ineq-N^m}, and recalling that $1/2\leq\alpha\leq 1$, we obtain
\[|\mathcal{S}_{j}|\ll\big(HZ^{1-2\alpha}+Z^{2(1+\sqrt{\varepsilon})(1-\alpha)}\big)(\log H)^{A_{j}}.\]
Choosing $Z=H^{1/(1+2\sqrt{\varepsilon}(1-\alpha))}$ to minimize this expression gives
\begin{equation}\label{opt-sj-bound}
|\mathcal{S}_{j}|\ll H^{2(1-\alpha)\big(1+\frac{\sqrt{\varepsilon}(2\alpha-1)}{1+2\sqrt{\varepsilon}(1-\alpha)}\big)}(\log H)^{A_{j}},
\end{equation}
which is valid provided we can choose parameters $X$ and $Y$ satisfying
\[2\leq Y\leq X\leq H,\qquad XY\leq H^{\sqrt{\varepsilon}/(1+2\sqrt{\varepsilon}(1-\alpha))},\qquad X^{\alpha-1/2}\geq Y^{1-\alpha}H^{\varepsilon}.\]
Indeed, for $1/2+2\sqrt{\varepsilon}\leq\alpha\leq 1$, a straightforward verification shows that these conditions are satisfied by setting $X=H^{x}$ and $Y=H^{y}$, where
\[x=\frac{\sqrt{\varepsilon}}{1+2\sqrt{\varepsilon}(1-\alpha)}-\varepsilon,\qquad y=\varepsilon.\]
With these choices, the definition of $m_{j}$ implies
\[m_{j}=1+\Big\lfloor\frac{\log Z}{\log N_{j}}\Big\rfloor\leq 1+\frac{\log Z}{\log Y}=1+\frac{1}{\varepsilon(1+2\sqrt{\varepsilon}(1-\alpha))}\ll\varepsilon^{-1},\]
thus $A_{j}\ll\varepsilon^{-2}$.
We now combine this bound with \eqref{final-sum-1} and \eqref{opt-sj-bound}.
As $\varepsilon$ is sufficiently small, the term $H^{2(1-\alpha)+\varepsilon}$ in \eqref{final-sum-1} is absorbed by the second, yielding
\[N(\alpha,k,Q,T)\ll_{\varepsilon}H^{2(1-\alpha)\big(1+\frac{\sqrt{\varepsilon}(2\alpha-1)}{1+2\sqrt{\varepsilon}(1-\alpha)}\big)}(\log H)^{O(\varepsilon^{-2})}\ll_{\varepsilon}H^{2(1-\alpha)+5\sqrt{\varepsilon}}\]
for $1/2+2\sqrt{\varepsilon}\leq\alpha\leq 1$.
It remains to consider the range $1/2\leq\alpha\leq 1/2+2\sqrt{\varepsilon}$.
Here, the Riemann-von Mangoldt formula gives the trivial bound
\[N(\alpha,k,Q,T)\ll\sum_{\substack{q\leq Q\\ (q,k)=1}}\sum_{\substack{\psi\mmod{q}\\ \psi~\!\mathrm{ primitive}}}\sum_{\xi\mmod{k}}T\log(kqT)\ll H\log H.\]
In this range we have $H\log H\ll_{\varepsilon}H^{1+\sqrt{\varepsilon}}\leq H^{2(1-\alpha)+5\sqrt{\varepsilon}}$, thus the bound
\[N(\alpha,k,Q,T)\ll_{\varepsilon}H^{2(1-\alpha)+5\sqrt{\varepsilon}}\]
holds for all $1/2\leq\alpha\leq 1$.
\end{proof}

For our purposes, \Cref{log-free-density} will be used to handle most of the zeros across the critical strip, but for the remaining zeros near the line $\Re(s)=1$ we need an estimate that is both explicit and sharper in this region, which is provided by the following proposition.

\begin{prop}\label{bound-Nlambda}
Let $N(\lambda)$ be the number of distinct Dirichlet characters $\chi$ modulo $q$ for which $L(s,\chi)$ has at least one zero in the region $\sigma\geq 1-\lambda/\L$, $|t|\leq 1$.
We label the characters $\chi^{(1)},\ldots,\chi^{(N)}$, where $N=N(\lambda)$, and for each $\chi^{(k)}$ we consider $\rho^{(k)}=1-\lambda^{(k)}/\L+i\gamma^{(k)}$ a corresponding zero.
Fix $\delta,a,b>0$ and assume \glh.
If $\lambda\leq\log\L$ and $q\geq q_{3}(\delta,a,b)$, then
\begin{equation}\label{N-lambda-general}
\sum_{k=1}^{N(\lambda)}\frac{\lambda^{(k)}}{e^{(3a+2b)\lambda^{(k)}}-e^{2a\lambda^{(k)}}}\leq\frac{a+b}{2ab}+\delta.
\end{equation}
In particular, taking $\delta=1$, $a=1/50$, and $b=a/\sqrt{2}$, we have that \glh~\! implies
\begin{equation}\label{N-lambda-particular}
\sum_{k=1}^{N(\lambda)}e^{-0.1\lambda^{(k)}}\leq 3
\end{equation}
for $\lambda\leq\log\L$ and $q$ large enough.
\end{prop}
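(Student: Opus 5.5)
The plan is to adapt Heath-Brown's ``positivity'' method for bounding $N(\lambda)$ (cf. \cite{Heath-Brown-Linnik}), with \Cref{bound-L'/L} taking the place of the unconditional bound for $-\Re L'/L$. The gain is that the error term $O(\L)$ becomes $\varepsilon\L$ under \glh. The weights $e^{(3a+2b)\lambda}-e^{2a\lambda}$ suggest a nonnegative test function $f$ supported on $[2a,\,3a+2b]$ (in the variable $u=\log n/\L$), built as a convolution of indicator functions of lengths $a$ and $b$. Its Laplace transform $F(z)=\int f(u)e^{-zu}\,du$ should then satisfy, at $z=-\lambda$, an explicit lower bound of the shape $(e^{(3a+2b)\lambda}-e^{2a\lambda})/\lambda$ times a constant, with $F(0)$ comparable to $ab$.

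\textbf{Step 1 (positivity).} For each $k$ set $c_k$ to be a complex weight, to be optimized later (essentially $c_k\propto \lambda^{(k)}/(e^{(3a+2b)\lambda^{(k)}}-e^{2a\lambda^{(k)}})$). Expand the trivially nonnegative sum
\[\sum_{n}\frac{\Lambda(n)}{n}\,f\Big(\frac{\log n}{\L}\Big)\Big|\sum_{k}c_k\chi^{(k)}(n)n^{-i\gamma^{(k)}}\Big|^{2}\ \ge 0.\]
This gives diagonal terms involving $\chi^{(k)}\overline{\chi^{(k)}}$, i.e.\ essentially the principal character, and off-diagonal terms involving $\chi^{(k)}\overline{\chi^{(l)}}$ at height $\gamma^{(k)}-\gamma^{(l)}$. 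For a prime-power sum $\sum\Lambda(n)\chi(n)n^{-1-it}f(\log n/\L)$, write it via inverse Laplace/Mellin as a contour integral of $-\frac{L'}{L}(1+w+it,\chi)F(w\L)$. Shift the contour to $\Re w\approx 1/(\L\log\L)$, inside the range \eqref{eq-range-sigma-t}.

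\textbf{Step 2 (evaluating each term).}
\begin{itemize}
\item \emph{Principal character.} The pole at $s=1$ contributes the main term $\L\,F(0)\sum|c_k|^2$, up to a factor $1+o(1)$.
\item \emph{Nonprincipal characters.} Apply \Cref{bound-L'/L}, with a small $\varepsilon$ depending on $\delta,a,b$. The zeros $\rho^{(k)}$ and $\rho^{(l)}$ then produce negative contributions. Specifically, the zero $\rho^{(k)}$ of $L(s,\chi^{(k)})$, after multiplication by $\overline{c_k}c_l$ and summation, yields terms of size $\L F(-\lambda^{(k)})\cdot|c_k|^2$. All remaining zeros are discarded by positivity, as in \Cref{zero-free-LH}.
\item \emph{Error terms.} Everything else is $\varepsilon\L(\sum|c_k|)^{2}$.
\end{itemize}
Rearranging and choosing the $c_k$ optimally (a Cauchy--Schwarz/duality step) should give an inequality of the form
\[\sum_k\frac{1}{F(-\lambda^{(k)})}\cdot(\text{normalization})\le\frac{a+b}{2ab}+\delta,\]
which is \eqref{N-lambda-general}. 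The particular case \eqref{N-lambda-particular} then follows by a numerical check. With $a=1/50$ and $b=a/\sqrt2$ we have $3a+2b<0.1$, so that $\lambda/(e^{(3a+2b)\lambda}-e^{2a\lambda})\ge c\,e^{-0.1\lambda}$ for an explicit constant $c$, and the constant works out to at most $3$.

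\textbf{Main obstacle.} The error term $\varepsilon\L(\sum_k|c_k|)^{2}$ must be smaller than $\delta\L\sum|c_k|^2$. This requires an a priori bound on $N(\lambda)$ so that $(\sum|c_k|)^2\le N\sum|c_k|^2$ is controlled. I would first get a crude bound from \Cref{log-free-density}: $N(\lambda)\ll e^{(2+\varepsilon)\lambda}$, which is bounded in terms of $\lambda$. For $\lambda$ as large as $\log\L$ this alone is insufficient. I would therefore handle it by truncating: the factor $e^{-(3a+2b)\lambda^{(k)}}$ in $c_k$ makes characters with large $\lambda^{(k)}$ contribute negligibly. Hence one only needs $N(\lambda_0)$ bounded for a fixed $\lambda_0=\lambda_0(\delta,a,b)$, and then lets $\varepsilon\to0$ depending on $\lambda_0$. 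Arranging this truncation consistently, together with the contour-shift error when $\gamma^{(k)}-\gamma^{(l)}$ is up to $2$, is where I expect the real work.
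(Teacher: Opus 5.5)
There is a genuine gap, and it starts in Step~1: the quadratic form you write down never sees the zeros $\rho^{(k)}$. Expanding $\sum_n\Lambda(n)n^{-1}f(\log n/\L)\,|\sum_k c_k\chi^{(k)}(n)n^{-i\gamma^{(k)}}|^2$ gives sums twisted by $\chi^{(k)}\overline{\chi^{(l)}}$ at height $\gamma^{(k)}-\gamma^{(l)}$. For $k=l$ this is the principal character, and essentially only the pole contributes. For $k\neq l$ the sum is governed by zeros of $L(s,\chi^{(k)}\overline{\chi^{(l)}})$, which are unrelated to $\rho^{(k)}$. So the term ``$\L F(-\lambda^{(k)})|c_k|^2$'' has no source.

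You could repair this by adjoining the principal character at height $0$, so that the cross terms involve $L(s,\chi^{(k)})$ near $1+i\gamma^{(k)}$. But the explicit formula then evaluates the transform at $(1+i\gamma^{(k)}-\rho^{(k)})\L=\lambda^{(k)}$, giving $\int f(u)e^{-\lambda^{(k)}u}\,du$. The optimized inequality then has the shape $\sum_k(F(\lambda^{(k)})/F(0))^2\le 1+(\text{error})$, not \eqref{N-lambda-general}. Nothing in your sketch produces the weight $\lambda/(e^{(3a+2b)\lambda}-e^{2a\lambda})=(\int_{2a}^{3a+2b}e^{\lambda u}\,du)^{-1}$ or the constant $\frac{a+b}{2ab}$. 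In the paper, up to the factor $a=2w$, the weight is the reciprocal of the diagonal mass $\sum_n(\sum_{d\mid n}\theta_d)^2n^{1-2\beta}(e^{-n/X}-e^{-n\L^2/U})$ of a bilinear form normalized by $n^{1/2-\rho}$. The constant is $\frac{2x-u-v}{2(v-u)}\cdot\frac{1}{2w}$, coming from Graham's Barban--Vehov asymptotics with $w=a/2$, $v-u=b$, and $u,x$ taken at the edge of the constraints $u>2w$, $x>v+w$.

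The more serious problem is the error term, and your truncation does not fix it. \Cref{bound-L'/L} improves the trivial $O(\L)$ only to $\varepsilon\L$. It is also only a one-sided pointwise bound for $-\Re L'/L$, so it does not by itself control complex cross terms $c_k\overline{c_l}$. The off-diagonal error $\varepsilon\L(\sum_k|c_k|)^2$ therefore forces $\sum_k|c_k|\ll 1$ over \emph{all} $k$ with $\lambda^{(k)}\le\lambda\le\log\L$. Your weights are $c_k\approx\lambda^{(k)}e^{-(3a+2b)\lambda^{(k)}}$ with $3a+2b<0.1$, and the only a priori information is $\#\{k:\lambda^{(k)}\le\mu\}\ll e^{(2+\varepsilon)\mu}$. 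So this sum can be a positive power of $\L$. Saying that large $\lambda^{(k)}$ contribute negligibly presupposes a bound of the very type you are proving. If instead you set $c_k=0$ for $\lambda^{(k)}>\lambda_0$, you only bound the part of the sum with $\lambda^{(k)}\le\lambda_0$. The tail $\sum_{\lambda^{(k)}>\lambda_0}e^{-0.1\lambda^{(k)}}$, which density alone allows to be about $\L^{1.9}$, is precisely what \eqref{N-lambda-particular} must control.

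The missing idea is a \emph{power-saving} bound for the off-diagonal terms. GLH gives this for mollified sums, via bounds for $L(s,\chi)$ on $\Re s=1/2$, but not for $\Lambda$-weighted sums, which are governed by the zeros themselves. The paper uses the detector $\sum_n(\sum_{d\mid n}\psi_d)(\sum_{d\mid n}\theta_d)\chi(n)n^{-\rho}e^{-n/X}$. Its $n=1$ term is $1$, while the whole sum is $O(\L^{-1})$ at a zero, by shifting to $\Re s=1/2-\beta$ under GLH. After duality the off-diagonal sums are $\ll q^{2\varepsilon+w-u/2}$, a power saving since $u>2w$. Their total contribution is therefore $O(\L^{-1}\sum_\chi w_\chi\sum_\chi|C_\chi|^2)$, which is absorbed into the left-hand side with no a priori bound on $N(\lambda)$ at all.
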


\begin{proof}
As in \cref{bound-L'/L}, we adapt the proof of \cite{Heath-Brown-Linnik}*{Lemma 11.1} to the GLH setting and provide the details for the reader's convenience.

Let $U=q^{u}$, $V=q^{v}$, $W=q^{w}$, and $X=q^{x}$ be parameters, for some constants $v>u>0$ and $w,x>0$ to be specified later, and define
\begin{equation}\label{def-psi-theta}
\psi_{d}=\left\{\begin{array}{ll}
\mu(d) & \text{ if }1\leq d\leq U,\\
\mu(d)\frac{\log(V/d)}{\log(V/U)} & \text{ if }U\leq d\leq V,\\
0 & \text{ if }d\geq V,
\end{array}\right.\qquad\theta_{d}=\left\{\begin{array}{ll}
\mu(d)\frac{\log(W/d)}{\log W} & \text{ if }1\leq d\leq W,\\
0 & \text{ if }d\geq W.
\end{array}\right.
\end{equation}
To simplify the notation, let's temporarily write $\chi=\chi^{(k)}$ and $\rho=\rho^{(k)}$, with $\rho=\beta+i\gamma$.
By Mellin inversion we have
\begin{equation}\label{sum-psi-theta-1}
\sum_{n=1}^{\infty}\Big(\sum_{d\mid n}\psi_{d}\Big)\Big(\sum_{d\mid n}\theta_{d}\Big)\frac{\chi(n)}{n^{\rho}}e^{-n/X}=\frac{1}{2\pi i}\int_{2-i\infty}^{2+i\infty}G(s+\rho,\chi)L(s+\rho,\chi)\Gamma(s)X^{s}\,ds,
\end{equation}
where
\[G(s,\chi)=\sum_{a\leq V}\sum_{b\leq W}\psi_{a}\theta_{b}\frac{\chi([a,b])}{[a,b]^{s}},\]
and $[a,b]$ denotes the least common multiple of $a$ and $b$.
For $s$ in the strip $1/2-\beta\leq\sigma\leq 2$ we have
$\Gamma(s)\ll e^{-|t|}$,
\[G(s+\rho,\chi)\ll\sum_{a\leq V}\sum_{b\leq W}\frac{1}{[a,b]^{1/2}}\ll\sum_{n\leq VW}\frac{d(n)^{2}}{n^{1/2}}\ll (VW)^{1/2}\L^{2},\]
and $L(s+\rho,\chi)\ll_{\varepsilon}(q(1+|t|))^{\varepsilon}$ for any $\varepsilon>0$ by \glh.
These bounds allow us to shift the integral above to $\Re(s)=1/2-\beta$, and the new integral is
\[\int_{1/2-\beta-i\infty}^{1/2-\beta+i\infty}G(s+\rho,\chi)L(s+\rho,\chi)\Gamma(s)X^{s}\,ds\ll q^{\varepsilon}(VWX^{-1})^{1/2}\L^{2}X^{1-\beta}\ll q^{2\varepsilon+(v+w-x)/2},\]
where we used that $1-\beta\ll\L^{-1}\log\L$.
Therefore, for $x>v+w$ we choose $\varepsilon<(x-v-w)/4$, to obtain
\begin{equation}\label{sum-psi-theta-2}
\sum_{n=1}^{\infty}\Big(\sum_{d\mid n}\psi_{d}\Big)\Big(\sum_{d\mid n}\theta_{d}\Big)\frac{\chi(n)}{n^{\rho}}e^{-n/X}=O(\L^{-1}).
\end{equation}
Moreover, from \eqref{def-psi-theta} we see that $\sum_{d\mid n}\psi_{d}=0$ for $2\leq n\leq U$, thus
\begin{equation}\label{zero-det-1}
\begin{aligned}
&\sum_{n=1}^{\infty}\Big(\sum_{d\mid n}\psi_{d}\Big)\Big(\sum_{d\mid n}\theta_{d}\Big)\frac{\chi(n)}{n^{\rho}}e^{-n/X}\\
&=e^{-\L^{2}/U}+\sum_{n=1}^{\infty}\Big(\sum_{d\mid n}\psi_{d}\Big)\Big(\sum_{d\mid n}\theta_{d}\Big)\frac{\chi(n)}{n^{\rho}}(e^{-n/X}-e^{-n\L^{2}/U})+O\Big(\sum_{n>U}d(n)^{2}e^{-n\L^{2}/U}\Big).
\end{aligned}
\end{equation}
Summation by parts then yields
\[\sum_{n>U}d(n)^{2}e^{-n\L^{2}/U}\ll U(\log U)^{3}e^{-\L^{2}}+\frac{\L^{2}}{U}\int_{U}^{\infty}y\log(y)^{3}e^{-y\L^{2}/U}\,dy\ll\L^{-1},\]
and since $e^{-\L^{2}/U}=1+O(\L^{-1})$, \eqref{sum-psi-theta-2} and \eqref{zero-det-1} imply
\[1\leq\big(1+O(\L^{-1})\big)\Big|\sum_{n=1}^{\infty}\Big(\sum_{d\mid n}\psi_{d}\Big)\Big(\sum_{d\mid n}\theta_{d}\Big)\frac{\chi(n)}{n^{\rho}}(e^{-n/X}-e^{-n\L^{2}/U})\Big|^{2}.\]
For each $\chi=\chi^{(k)}$, let $w_{\chi}\geq 0$ be weights to be chosen later.
Multiplying the inequality above by $w_{\chi}$ and summing over $\chi$ we obtain
\begin{equation}\label{before-duality}
\sum_{\chi}w_{\chi}\leq\big(1+O(\L^{-1})\big)\sum_{\chi}\Big|\sum_{n=1}^{\infty}a_{n,\chi}b_{n}\Big|^{2},
\end{equation}
where
\[a_{n,\chi}=w_{\chi}^{1/2}\Big(\sum_{d\mid n}\theta_{d}\Big)\frac{\chi(n)}{n^{\rho-1/2}}(e^{-n/X}-e^{-n\L^{2}/U})^{1/2}\]
and
\[b_{n}=\Big(\sum_{d\mid n}\psi_{d}\Big)n^{-1/2}(e^{-n/X}-e^{-n\L^{2}/U})^{1/2}.\]
In this setting, it is now convenient to apply the following duality principle for bilinear forms: If $\sum_{n}|\sum_{\chi}A_{n,\chi}C_{\chi}|^{2}\leq M\sum_{\chi}|C_{\chi}|^{2}$ for all $C_{\chi}$, then $\sum_{\chi}|\sum_{n}A_{n,\chi}B_{n}|^{2}\leq M\sum_{n}|B_{n}|^{2}$ for all $B_{n}$.
Thus, in order to bound the right-hand side of \eqref{before-duality} we first aim for a bound of the form
\begin{equation}\label{duality-a}
\sum_{n=1}^{\infty}\Big|\sum_{\chi}a_{n,\chi}C_{\chi}\Big|^{2}\leq M\sum_{\chi}|C_{\chi}|^{2}
\end{equation}
for arbitrary $C_{\chi}$.
Expanding the left-hand side of \eqref{duality-a}, the non-diagonal terms are of the form
\[C_{\chi}\overline{C_{\chi'}}(w_{\chi}w_{\chi'})^{1/2}\sum_{n=1}^{\infty}\Big(\sum_{d\mid n}\theta_{d}\Big)^{2}\frac{\chi(n)\overline{\chi'}(n)}{n^{\rho+\rho'-1}}(e^{-n/X}-e^{-n\L^{2}/U}).\]
By essentially repeating the argument used to obtain \eqref{sum-psi-theta-2} from \eqref{sum-psi-theta-1}, we get
\[\sum_{n=1}^{\infty}\Big(\sum_{d\mid n}\theta_{d}\Big)^{2}\frac{\chi(n)\overline{\chi'}(n)}{n^{\rho+\overline{\rho'}-1}}(e^{-n/X}-e^{-n\L^{2}/U})\ll q^{\varepsilon}WU^{-1/2}\L^{3/2}(U/\L^{2})^{2(1-\beta)}\ll q^{2\varepsilon+w-u/2}.\]
Hence, for $u>2w$ we may choose $\varepsilon>0$ sufficiently small and conclude
\[\sum_{n=1}^{\infty}\Big(\sum_{d\mid n}\theta_{d}\Big)^{2}\frac{\chi(n)\overline{\chi'}(n)}{n^{\rho+\overline{\rho'}-1}}(e^{-n/X}-e^{-n\L^{2}/U})=O(\L^{-1}),\]
thus the contribution to \eqref{duality-a} from non-diagonal terms is
\begin{equation}\label{cont-off-diag}
\ll\L^{-1}\Big(\sum_{\chi}|C_{\chi}|w_{\chi}^{1/2}\Big)^{2}\ll\L^{-1}\Big(\sum_{\chi}|C_{\chi}|^{2}\Big)\Big(\sum_{\chi}w_{\chi}\Big).
\end{equation}
On the other hand, the diagonal terms are of the form
\[|C_{\chi}|^{2}w_{\chi}\sum_{n=1}^{\infty}\Big(\sum_{d\mid n}\theta_{d}\Big)^{2}\frac{\chi_{0}(n)}{n^{2\beta-1}}(e^{-n/X}-e^{-n\L^{2}/U}),\]
where $\chi_{0}$ is the principal character modulo $q$.
To bound these terms we will use the following estimate due to Graham \cite{Graham-Barban-Vehov}*{Corollary page 84}:
\begin{equation}\label{est-Theta}
\Theta(N):=\sum_{n\leq N}\Big(\sum_{d\mid n}\theta_{d}\Big)^{2}=\left\{\begin{array}{ll}
\frac{N\log N}{(\log W)^{2}}+O\big(\frac{N}{(\log W)^{2}}\big) & \text{if }1\leq N\leq W,\vspace{1ex}\\
\frac{N}{\log W}+O\big(\frac{N}{(\log W)^{2}}\big) & \text{if }N\geq W.
\end{array}\right.
\end{equation}
To simplify the notation, let $g(y)=y^{1-2\beta}(e^{-y/X}-e^{-y\L^{2}/U})$.
Summation by parts gives
\begin{equation}\label{theta-by-parts}
\sum_{n=1}^{\infty}\Big(\sum_{d\mid n}\theta_{d}\Big)^{2}n^{1-2\beta}(e^{-n/X}-e^{-n\L^{2}/U})=-\int_{1}^{\infty}\Theta(y)g'(y)\,dy,
\end{equation}
and we now split this integral and study each range separately.
First, using \eqref{est-Theta} and the fact that $x>u>w$ and $1-\beta\ll\L^{-1}\log\log\L$, for $1\leq y\leq W$ we have
\[-g'(y)=y^{-2\beta}\Big(e^{-y/X}\Big(1-2\beta-\frac{y}{X}\Big)-e^{-y\L^{2}/U}\Big(1-2\beta-\frac{y\L^{2}}{U}\Big)\Big)\ll\frac{y^{1-2\beta}\L^{2}}{U}\ll\frac{\L^{3}}{yU},\]
thus
\begin{equation}\label{Theta-leq-W}
-\int_{1}^{W}\Theta(y)g'(y)\,dy\ll\frac{\L^{3}}{U(\log W)^{2}}\int_{1}^{W}\log y\,dy\ll\frac{W\L^{2}}{U}.
\end{equation}
Similarly, \eqref{est-Theta} and integration by parts yield
\begin{align*}
-\int_{W}^{\infty}\Theta(y)&g'(y)\,dy=-\frac{1+O(\L^{-1})}{\log W}\int_{W}^{\infty}g'(y)y\,dy\\
&=\frac{1+O(\L^{-1})}{\log W}\Big(W^{2-2\beta}(e^{-W/X}-e^{-W\L^{2}/U})+\int_{W}^{\infty}y^{1-2\beta}(e^{-y/X}-e^{-y\L^{2}/U})\,dy\Big)\\
&=\frac{\Gamma(2-2\beta)}{\L w}\Big(X^{2-2\beta}-\Big(\frac{U}{\L^{2}}\Big)^{2-2\beta}\Big)\big(1+O(\L^{-1})\big).
\end{align*}
In order to simplify this expression we recall that $e^{z}-1\geq z$ for real $z$.
Thus
\begin{equation}\label{lower-bound-main}
X^{2-2\beta}-U^{2-2\beta}=U^{2-2\beta}\Big(\Big(\frac{X}{U}\Big)^{2-2\beta}-1\Big)\geq (2-2\beta)U^{2-2\beta}\log(X/U)\gg U^{2-2\beta}(1-\beta)\L.
\end{equation}
This estimate, together with the bound $1-\beta\ll\L^{-1}\log\L$ and the Taylor expansion $\L^{z}=1+O(z\log\L)$ at $z=0$, imply
\begin{align*}
X^{2-2\beta}-\Big(\frac{U}{\L^{2}}\Big)^{2-2\beta}&=X^{2-2\beta}-U^{2-2\beta}\big(1+O\big((1-\beta)\L\big)\big)\\
&=\big(X^{2-2\beta}-U^{2-2\beta}\big)\big(1+O(\L^{-1}\log\L)\big).
\end{align*}
Similarly, we have $\Gamma(2-2\beta)=(2-2\beta)^{-1}\big(1+O(\L^{-1}\log\L)\big)$, and therefore
\begin{equation}\label{Theta-geq-W}
-\int_{W}^{\infty}\Theta(y)g'(y)\,dy=\frac{X^{2-2\beta}-U^{2-2\beta}}{2(1-\beta)\L w}\big(1+O(\L^{-1}\log\L)\big).
\end{equation}
Lastly, from \eqref{Theta-leq-W}, \eqref{lower-bound-main}, and \eqref{Theta-geq-W}, we see that the contribution of the range $1\leq y\leq W$ to the integral in \eqref{theta-by-parts} is negligible.
Hence, by choosing
\begin{equation}\label{def-weights}
w_{\chi}=\frac{2(1-\beta)\L w}{X^{2-2\beta}-U^{2-2\beta}}
\end{equation}
we conclude that the contribution of the diagonal terms to \eqref{duality-a} is
\begin{equation}\label{cont-diag}
\big(1+O(\L^{-1}\log\L)\big)\sum_{\chi}|C_{\chi}|^{2}.
\end{equation}
Together, \eqref{cont-off-diag} and \eqref{cont-diag} show that we may take
\begin{equation}\label{choice-M}
M=1+O\big(\L^{-1}\sum_{\chi}w_{\chi}\big)+O\big(\L^{-1}\log\L\big)
\end{equation}
in \eqref{duality-a}.
In conclusion, applying the duality principle and \eqref{duality-a} in \eqref{before-duality}, we obtain
\begin{equation}\label{after-duality}
\sum_{\chi}w_{\chi}\leq M\big(1+O(\L^{-1})\big)\sum_{n=1}^{\infty}\Big(\sum_{d\mid n}\psi_{d}\Big)^{2}n^{-1}(e^{-n/X}-e^{-n\L^{2}/U}).
\end{equation}
Following the same approach used to evaluate \eqref{theta-by-parts} using \eqref{est-Theta}, in this case the corresponding estimate of Graham is
\[\sum_{n\leq N}\Big(\sum_{d\mid n}\psi_{d}\Big)^{2}=\left\{\begin{array}{ll}
1 & \text{ if }1\leq N\leq U,\vspace{1ex}\\
\frac{N\log(N/U)}{(\log(V/U))^{2}}+O\big(\frac{N}{(\log(V/U))^{2}}\big) & \text{ if }U\leq N\leq V,\vspace{1ex}\\
\frac{N}{\log(V/U)}+O\big(\frac{N}{(\log(V/U))^{2}}\big) & \text{ if }N\geq V,
\end{array}\right.\]
and yields
\begin{equation}\label{sum-psi}
\sum_{n=1}^{\infty}\Big(\sum_{d\mid n}\psi_{d}\Big)^{2}n^{-1}(e^{-n/X}-e^{-n\L^{2}/U})=\frac{2x-u-v}{2(v-u)}\big(1+O(\L^{-1})\big).
\end{equation}
In this case the significant contributions come from the ranges $U\leq n\leq V$ and $n\geq V$, which are $1/2+O(\L^{-1})$ and $(x-v)/(v-u)+O(\L^{-1})$, respectively.
Finally, from \eqref{choice-M}, \eqref{after-duality}, and \eqref{sum-psi}, we get
\begin{align*}
\sum_{\chi}w_{\chi}&\leq\frac{2x-u-v}{2(v-u)}\Big(1+O\big(\L^{-1}\sum_{\chi}w_{\chi}\big)+O\big(\L^{-1}\log\L\big)\Big)\big(1+O(\L^{-1})\big)\\
&=\frac{2x-u-v}{2(v-u)}\big(1+O(\L^{-1}\log\L)\big)+O\big(\L^{-1}\sum_{\chi}w_{\chi}\big),
\end{align*}
and therefore
\begin{equation}\label{final-bound-sum-chi}
\sum_{\chi}w_{\chi}\leq\frac{2x-u-v}{2(v-u)}\big(1+O(\L^{-1}\log\L)\big).
\end{equation}
From our choice of weights \eqref{def-weights}, by writing $\chi=\chi^{(k)}$ and $\rho=\rho^{(k)}=1-\lambda^{(k)}/\L+i\gamma^{(k)}$, we conclude
\[\frac{4w(v-u)}{2x-u-v}\sum_{k=1}^{N(\lambda)}\frac{\lambda^{(k)}}{e^{2x\lambda^{(k)}}-e^{2u\lambda^{(k)}}}\leq 1+O(\L^{-1}\log\L)\]
provided $v>u>2w>0$ and $x>v+w$.
In order to minimize the resulting bound, we choose the parameters subject to the above constraints so as to maximize
\[\frac{w(v-u)}{(2x-u-v)(e^{2x\lambda^{(k)}}-e^{2u\lambda^{(k)}})}.\]
By monotonicity considerations, this is achieved by taking $x$ and $u$ as small as permitted, which under our constraints leads to the choices $x=v+w+\delta$ and $u=2w+\delta$, with $\delta>0$ small.
Writing $w=a/2$ and $v=u+b$ and substituting into \eqref{final-bound-sum-chi} yields \eqref{N-lambda-general}, with a new value of $\delta$.
The bound \eqref{N-lambda-particular} then follows from \eqref{N-lambda-general} and the inequality $e^{C\Lambda}-e^{D\Lambda}\leq (C-D)\Lambda e^{C\Lambda}$ valid for $C>D>0$ and $\Lambda>0$.
\end{proof}

\subsection{The Deuring--Heilbronn phenomenon}

We conclude the preliminaries by recording the following effective zero-repulsion effect due to  Benli, Goel, Twiss, and Zaman, which shows that an exceptional zero in \Cref{zero-free-LH} allows us to control the remaining zeros.

\begin{prop}[GLH + \cite{Benli-Goel-Twiss-Zaman}*{Theorem 1.3}]\label{repulsion}
Assume \glh, and let $\chi$ and $\chi_{1}$ be (not necessarily distinct) Dirichlet characters modulo $q$, with $\chi_{1}$ real and non-principal.
Suppose that $\beta_{1}=1-\lambda_{1}/\L$ is a real zero of $L(s,\chi_{1})$ with $\lambda_{1}<0.1$, and that $\rho=1-\lambda/\L+i\gamma$, $\rho\neq\rho_{1}$, is a zero of $L(s,\chi)$ with $\lambda<\L/2$ and $|\gamma|\leq\L$.
For all $\delta>0$ there exists an effectively computable constant $q_{4}(\delta)$ such that, if $q\geq q_{4}(\delta)$, then
\[\lambda\geq(1-\delta)\log\Big(\frac{\delta}{5\lambda_{1}}\Big).\]
\end{prop}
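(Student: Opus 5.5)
The plan is to deduce the statement directly from the explicit Deuring--Heilbronn theorem of Benli, Goel, Twiss, and Zaman \cite{Benli-Goel-Twiss-Zaman}*{Theorem 1.3}. We use \glh~\! only to remove the factors in that theorem that reflect unconditional bounds for $L$-functions near $\sigma=1$. As I recall it (the exact shape must be checked against \cite{Benli-Goel-Twiss-Zaman}), their theorem asserts the following. If $\beta_{1}=1-\lambda_{1}/\L$ is a real zero of $L(s,\chi_{1})$ and $\rho=1-\lambda/\L+i\gamma\neq\beta_1$ is a zero of some $L(s,\chi)$ in a box $|\gamma|\le T$, then for every $\delta>0$ and $q$ effectively large,
\[\lambda\geq\frac{1-\delta}{c}\log\Big(\frac{c'}{\lambda_{1}}\Big),\]
where $c$ is an explicit constant. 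This constant comes from a power-sum or mollifier argument, and it is governed by the exponent in the convexity-type bound used for $L(s,\chi)$ and $L(s,\chi\chi_1)$ near the $1$-line. Their framework is stated with a general exponent: if one has $|L(s,\chi)|\ll (q(1+|t|))^{\theta(1-\sigma)+\varepsilon}$ in a range $\sigma\ge 1-\kappa$, then the constant $c$ becomes a function of $\theta$.

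The first step is to verify the input hypothesis under \glh. By \glh~\! and Phragmen--Lindel\"of (exactly as in the proof of \cref{glh-implies-density}), for $1/2\le\sigma\le 1$ we have $L(s,\chi)\ll_\varepsilon (q(1+|t|))^{\varepsilon}$. Hence the admissible exponent is $\theta=\varepsilon$, arbitrarily small, for all the characters $\chi$, $\chi\chi_1$, $\chi_1$ and the principal character that appear in their auxiliary function. The auxiliary function is typically $\zeta(s)L(s,\chi_1)L(s+i\gamma,\chi)L(s+i\gamma,\chi\chi_1)$, whose coefficients are nonnegative along primes. The point to check is that the hypotheses $\lambda_1<0.1$, $\lambda<\L/2$ and $|\gamma|\le\L$ fit the ranges allowed in \cite{Benli-Goel-Twiss-Zaman}*{Theorem 1.3}. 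This should be harmless, since $|\gamma|\le\L$ gives $(q(1+|\gamma|))^\varepsilon\le q^{2\varepsilon}$.

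The second step is to substitute $\theta\to 0$ into their explicit constants. We then check that the leading coefficient of $\log(1/\lambda_1)$ tends to $1$ and the constant inside the logarithm can be taken to be $\delta/5$, after absorbing the $\varepsilon$-dependence into $\delta$. The limiting shape $\lambda\ge(1-\delta)\log(\delta/(5\lambda_1))$ should emerge as follows. In the mollified zero-detector, the length of the detecting polynomial can be taken as $q^{\eta}$ with $\eta\to 0$ once the integral shifted to $\Re s=1/2$ is controlled by $q^{\varepsilon}$, as in \eqref{sum-psi-theta-2}. The repulsion then reads roughly $e^{-\lambda}\lesssim \lambda_1/\delta$ up to the loss $(1-\delta)$.

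If the theorem is not stated with a free exponent, the fallback is to rerun their proof and replace each convexity or Burgess input by the \glh~\! bound. In that case the main obstacle is bookkeeping: we must track how the constants $\delta$ and $5$ arise, and confirm that all error terms stay effective. Effectivity holds because \glh~\! is used only as an upper bound with implicit constants depending on $\varepsilon$, and $\varepsilon$ is a function of $\delta$. I expect this matching of constants, particularly securing exactly the factor $1-\delta$ rather than some worse absolute constant, to be the delicate point.
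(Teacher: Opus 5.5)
Your proposal is correct and is essentially the paper's proof. The paper also deduces the statement directly from Theorem 1.3 of Benli--Goel--Twiss--Zaman, using GLH only to take their exponent $\theta$ arbitrarily small, with $T=\L$ to cover $|\gamma|\leq\L$, their parameters $\varepsilon=1/2$ and $B=100$ (from their Theorem 2.7), and $q$ large; your fallback of rerunning their argument is not needed.
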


\begin{proof}
Under their notation, this follows immediately by taking $\theta$ arbitrarily small, $T=\L$, $\varepsilon=1/2$, $B=100$ (see \cite{Benli-Goel-Twiss-Zaman}*{Theorem 2.7}), and $q$ sufficiently large.
\end{proof}

\section{A prime-detecting sum}

In this section we introduce a device that allows us to detect primes in a given residue class.
This reduces \Cref{main-thm} to the estimation of a sum over zeros of Dirichlet $L$-functions.

Given $L,K>0$ with $L>2K$, write $B=L-2K$, and consider the tent function
\[f(x)=\left\{\begin{array}{ll}
0 & \text{ if }x\leq L-2K,\\
x-(L-2K) & \text{ if }L-2K\leq x\leq L-K,\\
L-x & \text{ if }L-K\leq x\leq L,\\
0 & \text{ if }x\geq L.
\end{array}\right.\]
Denote
\[\Sigma:=\sum_{p\equiv a\mmod{q}}\frac{\log p}{p}f(\L^{-1}\log p).\]
Note that the choice of $f$ restricts the summation to primes $p\in [q^{L-2K},q^{L}]$, thus establishing $\Sigma>0$ immediately yields the existence of a prime $p\equiv a\pmod{q}$ with $q^{L-2K}\leq p\leq q^{L}$.

For $B\geq 2$, we have the following estimate due to Heath-Brown \cite{Heath-Brown-Linnik}*{Lemma 13.1}:
\begin{equation}\label{prime-detector-1}
\Big|\frac{\phi(q)}{\L}\Sigma-K^{2}\Big|\leq\sum_{\chi\neq\chi_{0}}\sum_{\rho_{\chi}}|F((1-\rho_{\chi})\L)|+O(\L^{-1}),
\end{equation}
where the inner sum on the right-hand side is over the non-trivial zeros $\rho_{\chi}$ of $L(s,\chi)$, and
\[F(s)=e^{-Bs}\Big(\frac{1-e^{-Ks}}{s}\Big)^{2}.\]
We now use \cref{log-free-density} to bound the contribution of most of the zeros appearing in \eqref{prime-detector-1}.
Assuming \glh, \eqref{eq-log-free-density} implies
\begin{equation}\label{density-strip}
\sum_{\chi\mmod{q}}N(\alpha,T,\chi)\ll_{\varepsilon}q^{(2+\varepsilon)(1-\alpha)}T^{1+\varepsilon}
\end{equation}
for all $\varepsilon>0$, $0\leq\alpha\leq 1$, and $T\geq 1$.
While this bound is weaker than both \eqref{eq-log-free-density} for $\alpha\geq 1/2$ and the trivial bound for $\alpha\leq 1/2$, we prefer it for its uniformity across the entire critical strip.
To bound the sum over zeros, we consider the regions $R_{m,n}$ defined by
\[1-\frac{m+1}{\L}\leq\sigma<1-\frac{m}{\L},\qquad\frac{n}{\L}\leq |t|<\frac{2n}{\L},\]
where $m\geq 0$ is an integer and $n\in\{0,1,2,4,8,\ldots\}$, and the condition on $t$ is replaced by $|t|\leq\L^{-1}$ when $n=0$.
By \eqref{density-strip}, the total number of zeros $\rho\in R_{m,n}$ contributing to the sum in \eqref{prime-detector-1} is $\ll_{\varepsilon}e^{(2+\varepsilon)m}(1+n/\L)^{1+\varepsilon}$, and for each such zero we have
\[|F((1-\rho)\L)|\ll e^{-Bm}\min\Big(K^{2},\frac{1}{m^{2}+n^{2}}\Big).\]
In total, the contribution from the zeros in regions with $\max(m,n)\geq\log\L$ is bounded by
\begin{align*}
\sum_{\max(m,n)\geq\log\L}\frac{e^{-(B-2-\varepsilon)m}}{m^2+n^2}\Big(1+\frac{n}{\L}\Big)^{1+\varepsilon}&\ll_{\varepsilon}\sum_{m\geq\log\L}e^{-(B-2-\varepsilon)m}+\sum_{n\geq\log\L}\frac{(1+n/\L)^{1+\varepsilon}}{n^2}\\
&\ll_{\varepsilon}\L^{-(B-2-\varepsilon)}+(\log\L)^{-2},
\end{align*}
provided $B>2+\varepsilon$.
We have thus proved the following estimate.

\begin{prop}\label{primes-from-zeros}
Assume \glh.
Given $\varepsilon>0$, set $K=\varepsilon$ and $L=2+4\varepsilon$, so that $B=2+2\varepsilon$.
Then, with the above notation, we have
\begin{equation}
\Big|\frac{\phi(q)}{\L}\Sigma-\varepsilon^{2}\Big|\leq\sum_{\chi\neq\chi_{0}}\sum_{\rho\in S(\chi)}|F((1-\rho)\L)|+O_{\varepsilon}\big((\log\L)^{-2}\big),
\end{equation}
where $S(\chi)$ is the multiset of zeros of $L(s,\chi)$ in the region
\begin{equation}\label{region-S}
1-\frac{\log\L}{\L}\leq\sigma\leq 1,\qquad |t|\leq\frac{\log\L}{\L}.
\end{equation}
\end{prop}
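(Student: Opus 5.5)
Most of the argument is already in place just before the statement; what remains is to specialize the parameters and redo the region count near $s=1$. We start from Heath-Brown's identity \eqref{prime-detector-1} with $K=\varepsilon$ and $L=2+4\varepsilon$. Then $B=L-2K=2+2\varepsilon\ge 2$, so the lemma applies and $K^{2}=\varepsilon^{2}$. We then split the sum over non-trivial zeros of $L(s,\chi)$, $\chi\neq\chi_{0}$, using the boxes $R_{m,n}$. The zeros in boxes with $\max(m,n)<\log\L$ lie in \eqref{region-S}, up to a factor of $2$ in the $t$-range. We keep their contribution as it is. All remaining zeros will be absorbed into the error term.

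For the remaining zeros we use \eqref{density-strip}. This follows from \cref{log-free-density} when $\alpha\ge 1/2$, and from the Riemann--von Mangoldt count $\sum_{\chi}N(0,T,\chi)\ll qT\log(qT)\ll_{\varepsilon} q^{(2+\varepsilon)(1-\alpha)}T^{1+\varepsilon}$ when $\alpha\le 1/2$. Apply it with $\alpha=1-(m+1)/\L$ and $T=2n/\L+1$. This gives at most $\ll_{\varepsilon} e^{(2+\varepsilon)m}(1+n/\L)^{1+\varepsilon}$ zeros in $R_{m,n}$. For such a zero, $z=(1-\rho)\L$ satisfies $\Re z\in[m,m+1]$ and $|\Im z|\asymp n$. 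Hence $|e^{-Bz}|\le e^{-Bm}$, and $|(1-e^{-Kz})/z|\le \min(K,2/|z|)$ because $\Re z\ge 0$. This gives the stated bound on $|F|$.

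The boxes with $\max(m,n)\ge\log\L$ then contribute at most
\[\sum e^{-(B-2-\varepsilon)m}\frac{(1+n/\L)^{1+\varepsilon}}{m^{2}+n^{2}}.\]
With $B-2-\varepsilon=\varepsilon>0$ we bound this in two ranges. In the range $m\ge\log\L$, summing over $n$ dyadically gives $O(1)$ per $m$ after using $(1+n/\L)^{1+\varepsilon}/n^{2}$; summing $e^{-\varepsilon m}/m^{2}$ over $m\ge\log\L$ then gives $O_{\varepsilon}((\log\L)^{-2})$. In the range $n\ge\log\L$, the sum over $m$ is $O_{\varepsilon}(1)$, and the dyadic sum $\sum_{n\ge\log\L}n^{-2}(1+n/\L)^{1+\varepsilon}$ is $O((\log\L)^{-2})$ because $n^{-1+\varepsilon}$ decays.

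Each zero is counted once, since the $R_{m,n}$ tile the strip, and the $O(\L^{-1})$ from Heath-Brown's lemma is absorbed. We obtain the proposition with $S(\chi)$ taken to be the zeros in $\sigma\ge 1-\log\L/\L$, $|t|<2\log\L/\L$. The region \eqref{region-S} has $|t|\le \log\L/\L$ instead; the factor $2$ is harmless, and for the later application it can be removed by replacing $\log\L$ with $\tfrac12\log\L$ in the cutoff for $n$, which only changes constants. The main obstacle is the bookkeeping of the dyadic sum in $n$. In particular, we must make sure that the factor $(1+n/\L)^{1+\varepsilon}$ does not destroy convergence for large $n$. It does not, since $n^{-2}\cdot n^{1+\varepsilon}$ still decays geometrically along dyadic $n$.
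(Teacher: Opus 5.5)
Your proposal is correct and is essentially the paper's own argument: Heath-Brown's identity \eqref{prime-detector-1} with $K=\varepsilon$ and $B=2+2\varepsilon$, the uniform count \eqref{density-strip} on the boxes $R_{m,n}$, the bound $|F((1-\rho)\L)|\ll e^{-Bm}\min(K^{2},(m^{2}+n^{2})^{-1})$, and the tail sum over $\max(m,n)\geq\log\L$. Your remark about the factor $2$ in the $t$-range is a fair catch of a point the paper glosses over, and the same harmless off-by-one occurs in $\sigma$ (a box with $m<\log\L$ can reach down to $\sigma=1-(m+1)/\L<1-\log\L/\L$). The clean fix for both is to send every box with $\max(m+1,2n)>\log\L$ to the tail, which still costs only $O_{\varepsilon}(\L^{-\varepsilon}+(\log\L)^{-2})$.
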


We conclude this section with a bound for an auxiliary sum over $S(\chi)$ that arises naturally in the proof of \Cref{main-thm}.

\begin{lem}\label{bound-sum-zeros}
Let $\delta>0$ be given, and let $\chi$ be a non-principal character modulo $q$.
Then, for $q\geq q_{5}(\delta)$, we have
\[\sum_{\rho\in S(\chi)}|e^{B(1-\rho)\L}F((1-\rho)\L)|\leq K^{2}+\frac{K}{3}+\delta.\]
\end{lem}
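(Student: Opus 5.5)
The plan is to prove the stronger bound $K^{2}+\delta$ by splitting $S(\chi)$ into at most one exceptional zero and all remaining zeros; the extra $K/3$ then costs nothing. First I would rewrite the summand. Put $s=(1-\rho)\L=\lambda-i\gamma\L$, where $\rho=1-\lambda/\L+i\gamma$. Then
\[
e^{Bs}F(s)=\Big(\frac{1-e^{-Ks}}{s}\Big)^{2},\qquad \frac{1-e^{-Ks}}{s}=\int_{0}^{K}e^{-us}\,du .
\]
Since $\lambda\ge 0$ on $S(\chi)$, each summand is at most $\big(\int_0^K e^{-u\lambda}\,du\big)^2\le K^2$. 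Using also $|1-e^{-Ks}|\le 2$, each summand is at most $4/(\lambda^{2}+\gamma^{2}\L^{2})$. The first bound will handle a single zero very close to $1$, and the second bound will handle all the others.

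Next I would fix a large parameter $\eta\ge 1$, depending on $\delta$, and apply \cref{zero-free-LH} with this $\eta$. For $q\ge q_2(\eta)$, the product $\prod_\chi L(s,\chi)$ has at most one zero in $\sigma\ge 1-\eta/\L$, $|t|\le\L$, and $S(\chi)$ lies in the range $|t|\le\L$. So $S(\chi)$ contains at most one zero with $\lambda<\eta$. If it exists, it contributes at most $K^{2}$ by the first bound. Every other zero $\rho\in S(\chi)$ has $\lambda\ge\eta$.

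The main step is showing that the zeros with $\lambda\ge\eta$ contribute at most $\delta$. I would use \cref{bound-L'/L} with a small parameter $\varepsilon$, at the point $s_0=1+\eta/\L$; this point lies in the range \eqref{eq-range-sigma-t} for $q$ large. The disc $|1-\rho|\le\delta(\varepsilon)$ contains all of $S(\chi)$ when $q$ is large, since $S(\chi)$ has $|1-\rho|\ll\L^{-1}\log\L$. All summands $\Re(s_0-\rho)^{-1}$ are positive, so I may keep only the zeros of $S(\chi)$. Combining with the trivial lower bound $-\Re\frac{L'}{L}(s_0,\chi)\ge\frac{\zeta'}{\zeta}(1+\eta/\L)=-\L/\eta+O(1)$ gives
\[
\sum_{\rho\in S(\chi)}\frac{\eta+\lambda}{(\eta+\lambda)^{2}+\gamma^{2}\L^{2}}\le\frac1\eta+\varepsilon+O(\L^{-1}).
\]
For $\lambda\ge\eta$ we have $(\eta+\lambda)^2+\gamma^2\L^2\le 4(\lambda^2+\gamma^2\L^2)$ and $\eta+\lambda\ge\eta$. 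Hence
\[
\frac{4}{\lambda^{2}+\gamma^{2}\L^{2}}\le\frac{16}{\eta}\cdot\frac{\eta+\lambda}{(\eta+\lambda)^{2}+\gamma^{2}\L^{2}} .
\]
Summing, the non-exceptional zeros contribute at most $\frac{16}{\eta}\big(\frac1\eta+\varepsilon\big)+o(1)$. I would take $\eta=\max(1,8/\sqrt{\delta})$ and $\varepsilon=\delta/64$, and choose $q_5(\delta)$ larger than $q_1(\varepsilon)$, $q_2(\eta)$ and the thresholds for the $O(\L^{-1})$ and disc-containment conditions. This makes the contribution at most $\delta$, and the total is at most $K^{2}+\delta\le K^{2}+K/3+\delta$.

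The only delicate point I expect is this counting step: checking that \cref{bound-L'/L} really captures every zero of $S(\chi)$ at the chosen point, and that the lower bound on $-\Re L'/L$ holds uniformly, including when $\chi$ is real and carries the exceptional zero. Positivity of each summand makes keeping a sub-collection of zeros legitimate, and the exceptional zero only makes the left-hand side larger, so it does not spoil the inequality used for the others.
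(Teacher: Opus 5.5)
Your argument is correct, but it takes a different route from the paper. The paper does not prove this lemma internally. It quotes Heath-Brown's unconditional bound $K^{2}+K\phi+\delta$ (his Lemma 13.3), uses only $\phi\le 1/3$, and stops there.

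You instead derive the estimate from the paper's own GLH machinery:
\begin{itemize}
\item \cref{zero-free-LH} leaves at most one zero of $S(\chi)$ with $\lambda\le\eta$, and you bound that zero trivially by $K^{2}$.
\item \cref{bound-L'/L} at $s_{0}=1+\eta/\L$, combined with $-\Re\frac{L'}{L}(s_{0},\chi)\ge\frac{\zeta'}{\zeta}(s_{0})$, gives $\sum_{\rho\in S(\chi)}\frac{\eta+\lambda}{(\eta+\lambda)^{2}+\gamma^{2}\L^{2}}\le\eta^{-1}+\varepsilon+O(\L^{-1})$.
\item This controls all remaining zeros by $O(\eta^{-2}+\varepsilon\eta^{-1})$.
\end{itemize}
The details check out: the containment of $S(\chi)$ in the disc $|1-\rho|\le\delta(\varepsilon)$, positivity when discarding zeros, and the comparison $4/(\lambda^{2}+\gamma^{2}\L^{2})\le 16\eta^{-1}(\eta+\lambda)/((\eta+\lambda)^{2}+\gamma^{2}\L^{2})$ for $\lambda\ge\eta$.

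What your route buys is a self-contained proof and the sharper bound $K^{2}+\delta$. This is Heath-Brown's estimate with $\phi$ replaced by $0$, as one would expect under GLH. It is also more than enough for its use in \eqref{bound-S-chik}.

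What the paper's citation buys is unconditionality, and that is the one caveat for you. The lemma is stated without assuming GLH, whereas both \cref{bound-L'/L} and \cref{zero-free-LH} require it. So you have proved only the conditional version. This is harmless in context, because the lemma is only invoked in the proof of \cref{main-thm}, where GLH is in force. You should nevertheless state the GLH hypothesis explicitly in your version of the lemma.
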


\begin{proof}
Heath-Brown \cite{Heath-Brown-Linnik}*{Lemma 13.3} establishes an upper bound of $K^{2}+K\phi+\delta$ for this sum, where $\phi\leq 1/3$ is a parameter depending on $\chi$.
The result then follows.
\end{proof}

\section{\texorpdfstring{Proof of \Cref{main-thm}}{Proof of Theorem 1.1}}

Fix $0<\varepsilon<0.1$.
We adopt the parameter choices $K=\varepsilon$ and $L=2+4\varepsilon$ from \cref{primes-from-zeros}, which yield $B=2+2\varepsilon$.
It therefore suffices to show that the sum
\begin{equation}\label{sum-to-bound}
\sum_{\chi\neq\chi_{0}}\sum_{\rho\in S(\chi)}|F((1-\rho)\L)|
\end{equation}
is strictly less than $\varepsilon^{2}$ for $q$ large enough.
Taking $\eta=\eta(\varepsilon)$ sufficiently large, say $\eta\geq 7\log(1/\varepsilon)$, \Cref{zero-free-LH} ensures that, for $q\geq q_{2}(\eta)$, the function $\prod_{\chi\mmod{q}}L(s,\chi)$ has at most one zero in the region $\sigma\geq 1-\eta/\L$, $|t|\leq \L$.
We now split the argument into two cases.

\subsection{Non-exceptional case}

If there is no such exceptional zero, let $\chi^{(1)},\ldots,\chi^{(N)}$ be the distinct characters modulo $q$ whose $L$-function has a zero in the region \eqref{region-S}, and for each $\chi^{(k)}$ let $\rho^{(k)}=1-\lambda^{(k)}/\L+i\gamma^{(k)}$ be a zero in $S(\chi^{(k)})$ with largest real part.
Then \Cref{bound-sum-zeros} yields
\begin{equation}\label{bound-S-chik}
\sum_{\rho\in S(\chi^{(k)})}|F((1-\rho)\L)|\leq |e^{B(\rho^{(k)}-1)\L}|\sum_{\rho\in S(\chi^{(k)})}|e^{B(1-\rho)\L}F((1-\rho)\L)|\leq 0.05e^{-B\lambda^{(k)}}.
\end{equation}
Moreover, $B\geq 2$ and $\lambda^{(k)}>\eta$ for all $k$, thus \cref{bound-Nlambda} and \eqref{bound-S-chik} imply the sum \eqref{sum-to-bound} is
\[\leq 0.05\sum_{k=1}^{N}e^{-B\lambda^{(k)}}\leq 0.05e^{-1.9\eta}\sum_{k=1}^{N}e^{-0.1\lambda^{(k)}}\leq e^{-1.9\eta}\leq 0.9\varepsilon^{2}\]
for $q$ large enough.

\subsection{Exceptional case}

Suppose now that the exceptional zero $\rho_{1}=1-\lambda_{1}/\L$ does exist.
By a well-known result of Heath-Brown \cite{Heath-Brown-Siegel-Linnik}*{Corollary 2}, the desired bound $P(a,q)\leq q^{2+\varepsilon}$ is already known to hold when $\lambda_{1}$ is sufficiently small in terms of $\varepsilon$.
The use of this result is what ultimately makes our main theorem ineffective.
To obtain an effectively computable constant in this regime, one can instead appeal to \cite{Heath-Brown-Siegel-Linnik}*{Corollary 1}, which yields the weaker bound $P(a,q)\leq q^{3+\varepsilon}$.
We may therefore restrict our attention to the case when $\lambda_{1}\gg_{\varepsilon} 1$.

We label the characters $\chi^{(k)}$ and zeros $\rho^{(k)}$ as before, except that now we do it by taking $\rho_{1}$ to be a zero of $\chi^{(1)}$.
Additionally, if $S(\chi^{(1)})\neq\{\rho_{1}\}$, we choose $\rho^{(1)}$ to satisfy
\[\Re(\rho^{(1)})=\max\{\Re(\rho):\rho\in S(\chi^{(1)}),~\rho\neq\rho_{1}\},\]
and if $S(\chi^{(1)})=\{\rho_{1}\}$ the term $k=1$ is omitted from the sum below.
Consider now $\lambda^{*}=\min\{\lambda^{(k)}\}$.
Arguing as in the non-exceptional case, we obtain
\begin{equation}\label{sum-to-bound-excep}
\sum_{\chi\neq\chi_{0}}\sum_{\rho\in S(\chi)}|F((1-\rho)\L)|\leq |F((1-\rho_{1})\L)|+0.05e^{-1.9\lambda^{*}}\sum_{k=1}^{N}e^{-0.1\lambda^{(k)}}\leq\varepsilon^{2}e^{-2\lambda_{1}}+e^{-1.9\lambda^{*}}
\end{equation}
for $q$ large enough.
Suppose now that $\lambda_{1}<\varepsilon^{10}$, so that $\lambda_{1}<0.1$ and
\begin{equation}\label{eq-lambda1-eps}
\frac{3}{4}\log\Big(\frac{1}{\lambda_{1}}\Big)\geq 2\log\Big(\frac{1}{\varepsilon^{2}}\Big).
\end{equation}
Using \Cref{repulsion} with $\delta$ sufficiently small yields
\begin{equation}\label{lambda-star}
\lambda^{*}\geq\frac{3}{4}\log\Big(\frac{1}{\lambda_{1}}\Big)
\end{equation}
for $q$ large enough, which together with \eqref{eq-lambda1-eps} gives
\begin{equation}\label{bound-constant}
e^{-0.5\lambda^{*}}\leq\varepsilon^{2}.
\end{equation}
Moreover, as $\frac{3}{4}\times 1.4=1.05>1$, \eqref{lambda-star} implies
\begin{equation}\label{bound-repulsion}
e^{-1.4\lambda^{*}}\leq\lambda_{1}.
\end{equation}
Combining \eqref{sum-to-bound-excep}, \eqref{bound-constant}, and \eqref{bound-repulsion}, we conclude
\[\sum_{\chi\neq\chi_{0}}\sum_{\rho\in S(\chi)}|F((1-\rho)\L)|\leq\varepsilon^{2}(e^{-2\lambda_{1}}+\lambda_{1}),\]
and since
\begin{equation}\label{bound-exp}
e^{-x}<1-x+\frac{x^{2}}{2}\qquad\text{ for }x>0,
\end{equation}
we get
\[\sum_{\chi\neq\chi_{0}}\sum_{\rho\in S(\chi)}|F((1-\rho)\L)|<\varepsilon^{2}\big(1-\lambda_{1}(1-2\lambda_{1})\big)
.\]
As $1\ll_{\varepsilon}\lambda_{1}<0.1$, the right-hand side above is strictly less than $\varepsilon^{2}$ for $q$ large enough, which concludes this case.
Lastly, if $\lambda_{1}\geq\varepsilon^{10}$, then \eqref{bound-exp} yields
\begin{equation}\label{exceptional-far}
e^{-2\lambda_{1}}<e^{-2\varepsilon^{10}}<1-2\varepsilon^{10}+2\varepsilon^{20}<1-\varepsilon^{10},
\end{equation}
and since $\lambda^{*}\geq\eta$, from \eqref{sum-to-bound-excep} and \eqref{exceptional-far} we see that
\[\sum_{\chi\neq\chi_{0}}\sum_{\rho\in S(\chi)}|F((1-\rho)\L)|<\varepsilon^{2}-\varepsilon^{12}+e^{-1.9\eta}<\varepsilon^{2}\]
for $q$ large enough, and the proof is complete.

\section*{Acknowledgments}

I would like to thank Asif Zaman for suggesting this problem and for many enlightening discussions, and Andrew Granville for his comments and valuable advice.
Part of this project was completed during my visit to Universit\'e de Montr\'eal, and I am grateful for their hospitality.

\end{document}